\theoremstyle{plain}
\newtheorem{theorem}{Theorem}
\newtheorem{proposition}[theorem]{Proposition}
\newtheorem{lemma}[theorem]{Lemma}
\newtheorem{claim}[theorem]{Claim}
\newtheorem{definition}[theorem]{Definition}
\newtheorem{remark}[theorem]{Remark}
\newtheorem{example}[theorem]{Example}
\numberwithin{equation}{section}
\def\R{\mathbb{R}}
\def\C{\mathbb{C}}
\def\Mat{\mathrm{Mat}}
\DeclareMathOperator{\tr}{tr}
\DeclareMathOperator{\Hom}{Hom}
\DeclareMathOperator{\diag}{diag}
\DeclareMathOperator{\Ind}{Ind}
\DeclareMathOperator{\GL}{GL}
\DeclareMathOperator{\SL}{SL}
\DeclareMathOperator{\Real}{Re}
\begin{document}
\title[Generalized doubling: $(k,c)$ models]{The generalized doubling method: $(k,c)$ models}
\author{Yuanqing Cai}
\author{Solomon Friedberg}
\author{Dmitry Gourevitch}
\author{Eyal Kaplan}
\address{Cai: Faculty of Mathematics and Physics, Institute of Science and Engineering, Kanazawa University, Kakumamachi, Kanazawa, Ishikawa, 920-1192, Japan}
\email{cai@se.kanazawa-u.ac.jp}
\address{Friedberg:  Department of Mathematics, Boston College, Chestnut Hill, MA 02467-3806, USA}
\email{solomon.friedberg@bc.edu}
\address{Dmitry Gourevitch: Faculty of Mathematics and Computer Science, Weizmann Institute of Science, POB 26, Rehovot 76100, Israel }
\email{dmitry.gourevitch@weizmann.ac.il}
\address{Kaplan: Department of Mathematics, Bar Ilan University, Ramat Gan 5290002, Israel}
\email{kaplaney@gmail.com}
\thanks{This research was supported by the ERC, StG grant number 637912 (Cai), 
by the JSPS KAKENHI grant number 19F19019 (Cai), by MEXT Leading Initiative for Excellent Young Researchers Grant Number JPMXS0320200394 (Cai), by the BSF, grant number 2012019 (Friedberg), by the NSF, grant numbers 1500977, 1801497 and 2100206 (Friedberg), by the ERC, StG grant number 637912 (Gourevitch), and by the Israel Science Foundation, grant numbers 376/21 and 421/17 (Kaplan).}
\subjclass[2010]{Primary 11F70; Secondary 11F55, 11F66, 22E50, 22E55}
\keywords{Doubling method, Whittaker models, Speh representations, Shalika model}
\begin{abstract}
One of the key ingredients in the recent construction of the generalized doubling method is a new class of models, called $(k,c)$ models, for local components of generalized Speh representations. We construct a family of $(k,c)$ representations, in a purely local setting, and discuss their realizations using inductive formulas. Our main result is a uniqueness theorem which is essential for the proof that
the generalized doubling integral is Eulerian.
\end{abstract}
\maketitle
\addtocontents{toc}{\protect\setcounter{tocdepth}{1}}
\section*{Introduction}\label{intro}

A model is a fundamental concept in representation theory and integral representations. Typically, a model
for a representation arises from an equivariant functional, which allows one to realize the representation
in a space of complex-valued functions with some natural geometric properties. The useful cases are when the functional is unique up to scaling, and the class of representations affording
the model is broad. One important example is the Whittaker model, which has had a profound impact on the
study of representations with a vast number of applications, perhaps most notably Shahidi's theory of local
coefficients.

Let $F$ be a local field of characteristic $0$.
In this short note we discuss a new class of models, $(k,c)$ models, for representations of $\GL_{kc}=\GL_{kc}(F)$, which first appeared in the construction of
the generalized doubling integral (\cite{CFGK2}) in the context of generalized Speh representations. Our main result:
Theorem~\ref{theorem:any rep is WSS}, is that the local generalized Speh representation (defined in \cite{Jac4}) of $\GL_{kc}$ corresponding to a unitary generic representation $\tau$ of $\GL_k$ admits a unique $(k,c)$ model. Our result is in fact stronger: We construct
a map $\rho_c$ from irreducible generic representations $\tau$ of $\GL_k$ to $(k,c)$ representations $\rho_c(\tau)$.
This general context is essential for the analysis of the local generalized doubling integrals (\cite{CFGK2,DimaKaplan}) when data are ramified or archimedean.
We also discuss two realizations of the $(k,c)$ functional, which are also important for the study of such integrals.

The main application of Theorem~\ref{theorem:any rep is WSS} concerns the aforementioned integrals from \cite{CFGK2}:
This theorem completes the proof that the global integral of \cite{CFGK2} is Eulerian. In \textit{loc. cit.} uniqueness was
only proved when data are unramified, thus producing an ``almost Eulerian" integral, i.e., only separating out the unramified places (cf. \cite[(3.1)]{CFGK2}). The existence of an Euler product is important for the development of the local theory.

\section{Preliminaries}\label{Groups}
Let $F$ be a local field of characteristic $0$. Identify linear algebraic $F$-groups with their $F$ points, i.e., $\GL_l=\GL_l(F)$. Fix the Borel subgroup $B_{\GL_l}=T_{\GL_l}\ltimes N_{\GL_l}$ of upper triangular invertible matrices in $\GL_l$, where $T_{\GL_l}$ is the diagonal torus. For a $d$ parts composition $\beta=(\beta_1,\ldots,\beta_d)$ of $l$, $P_{\beta}=M_{\beta}\ltimes V_{\beta}$ denotes the corresponding standard parabolic subgroup, where $V_{\beta}<N_{\GL_l}$. The unipotent subgroup opposite to $V_{\beta}$ is denoted $V_{\beta}^-$ and $\delta_{P_{\beta}}$ is the modulus character of $P_{\beta}$. For an integer $c\geq0$, $\beta c=(\beta_1c,\ldots,\beta_dc)$ is a composition of $lc$. Let $w_{\beta}$ be the permutation matrix consisting of blocks of identity matrices $I_{\beta_1},\ldots, I_{\beta_d}$, with $I_{\beta_i}\in\GL_{\beta_i}$ on its anti-diagonal, beginning with $I_{\beta_1}$ on the top right, then $I_{\beta_2}$, etc. E.g., $w_{(a,b)}=\left(\begin{smallmatrix}&I_a\\I_b\end{smallmatrix}\right)$.
We use $\tau_{\beta}$ to denote a representation of $M_{\beta}$, where $\tau_{\beta}=\otimes_{i=1}^d\tau_i$ ($\tau_i$ is then a
representation of $\GL_{\beta_i}$). Let $\Mat_{a\times b}$ and $\Mat_{a}$ denote the spaces of $a\times b$ or $a\times a$ matrices.
For $g\in \Mat_{a\times b}$, ${}^tg$ is the transpose of $g$. The trace map is denoted $\tr$. For $x,y\in \GL_l$, ${}^{x}y=xyx^{-1}$ and if $Y<\GL_l$, ${}^xY=\{{}^xy:y\in Y\}$.

All representations in this work are by definition complex and smooth. A generic representation of $\GL_l$ will be admissible, by definition. Over archimedean fields, by an admissible representation we mean admissible Fr\'{e}chet of moderate growth. We use the smooth and normalized induction functor.

Let $U<R<\GL_l$ be closed subgroups such that $U$ is a unipotent subgroup, and fix a character $\psi$ of $U$. For a representation $\sigma$ of $R$ on a space $\mathcal{V}$, the Jacquet module $J_{U,\psi}(\pi)$ is the quotient of $\mathcal{V}$ by the subspace spanned by $\{\pi(u)\xi-\psi(u)\xi:\xi\in\mathcal{V},u\in U\}$ over $p$-adic fields, and by the closure of this subspace for archimedean fields.
Then $J_{U,\psi}(\pi)$ is a representation of $R$ and we normalize the action as in \cite[1.8]{BZ2}.

When the field is $p$-adic, an entire (resp., meromorphic) function $f(\zeta_1,\ldots,\zeta_m):\C^m\rightarrow\C$ will always be an element of
$\C[q^{\mp\zeta_1},\ldots ,q^{\mp\zeta_m}]$, (resp., $\C(q^{-\zeta_1},\ldots, q^{-\zeta_m})$).

\section{Representations of type $(k,c)$}\label{Representations of type (k,c)}

\subsection{Definition}\label{(k,c) definition}
Let $k,c\geq1$ be integers. A partition $\sigma=(a_1,\ldots,a_l)$ of $kc$ such that $a_i>0$ for all $i$ identifies a
subgroup $V(\sigma)<N_{\GL_{kc}}$ as follows. Consider the multi-set of integers
$\Lambda_{\sigma}=\{a_i-2j+1:1\leq i \leq l,\,1\leq j\leq a_i\}$ and let $p_{\sigma}$ be the $kc$-tuple obtained by arranging $\Lambda_{\sigma}$ in decreasing order. For any $x\in F^*$, put $x^{p_{\sigma}}=\diag(x^{p_{\sigma}(1)},\ldots,x^{p_{\sigma}(kc)})\in T_{\GL_{kc}}$. The one-parameter subgroup
$\{x^{p_{\sigma}}:x\in F^*\}$ acts on the Lie algebra of $N_{\GL_{kc}}$ by conjugation, and $V(\sigma)$ is the subgroup generated by the weight subspaces of weight at least $2$. (This is not the subgroup $V_{\beta}$ defined for compositions). Let
$G_{\sigma}<\GL_{kc}$ denote the centralizer of $\{x^{p_{\sigma}}:x\in F^*\}$, it acts on the set of characters of $V(\sigma)$. Under
this action there is a unique open orbit $\mathcal{O}$ over $\C$, let $\psi_o\in \mathcal{O}$. A character $\psi'$ of $V(\sigma)$ is called generic if its stabilizer $G_{\sigma,\psi}$ in $G_{\sigma}$ is of the same type as $G_{\sigma,\psi_o}$ over $\C$. For further details see
\cite{G2}, \cite[\S~5]{CM} and \cite{Cr}. Let $\widehat{V}(\sigma)_{\mathrm{gen}}$ denote the set of generic characters of $V(\sigma)$. If $\sigma'$ is another partition of $kc$, write $\sigma'\succsim\sigma$ if $\sigma'$ is greater than or non-comparable with $\sigma$ under the natural partial ordering.

For $\sigma=(k^c)$, $V(\sigma)=V_{(c^k)}$ and $M_{(c^k)}$ acts transitively on $\widehat{V}(\sigma)_{\mathrm{gen}}$. We fix $\psi\in \widehat{V}(\sigma)_{\mathrm{gen}}$ by taking a nontrivial additive character $\psi$ of $F$ and extending it to a character of $V_{(c^k)}$ by
\begin{align}\label{eq:k c character}
\psi(v)=\psi(\sum_{i=1}^{k-1}\mathrm{tr}(v_{i,i+1})),\qquad v=(v_{i,j})_{1\leq i,j\leq k},\qquad v_{i,j}\in\Mat_c.
\end{align}
Then $G_{\sigma,\psi}=\GL_c^{\triangle}$, the diagonal embedding of $\GL_c$ in $M_{(c^k)}$.

\begin{definition}
An admissible representation $\rho$ of $\GL_{kc}$ is of type $(k,c)$ (briefly, $\rho$ is $(k,c)$), if $\Hom_{V(\sigma)}(\rho,\psi')=0$ for all $\sigma\succsim(k^c)$ and $\psi'\in\widehat{V}(\sigma)_{\mathrm{gen}}$, and
$\dim\Hom_{V_{(c^k)}}(\rho,\psi)=1$.
\end{definition}

A $(k,c)$ functional on $\rho$ (with respect to $\psi$) is a nonzero element of $\Hom_{V_{(c^k)}}(\rho,\psi)$. If $\rho$ is of type $(k,c)$, the space of such functionals is one dimensional. The resulting model (which is unique by definition) is called a $(k,c)$ model, and denoted $W_{\psi}(\rho)$. If $\lambda$ is a fixed $(k,c)$ functional, $W_{\psi}(\rho)$ is the space of functions $g\mapsto\lambda(\rho(g)\xi)$ where $g\in\GL_{kc}$ and $\xi$ is a vector in the space of $\rho$. Then $W_{\psi}(\rho)$ is a quotient of $\rho$, and when $\rho$ is irreducible $W_{\psi}(\rho)\cong\rho$.

The following is a Rodier-type result for $(k,c)$ representations.
\begin{proposition}\label{prop:rodier non}
For $1\leq i\leq d$, let $\rho_i$ be a $(k_i,c)$ representation. If $F$ is archimedean we further assume $k_1=\ldots=k_d=1$ and for each $i$, $\rho_i=\tau_i\circ\det$ for a quasi-character $\tau_i$ of $F^*$ and $\det$ defined on $\GL_c$.
Then $\rho=\Ind_{P_{\beta c}}^{\GL_{kc}}(\otimes_{i=1}^d\rho_i)$ is of type $(k,c)$, where $\beta=(k_1,\ldots,k_d)$ and $k=k_1+\ldots+k_d$ (if $F$ is archimedean, $k=d$ and $\beta=(1^k)$).
\end{proposition}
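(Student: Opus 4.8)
The plan is to verify, one at a time, the two conditions that make $\rho$ a representation of type $(k,c)$: the vanishing of $\Hom_{V(\sigma)}(\rho,\psi')$ for every $\sigma\succsim(k^c)$ and $\psi'\in\widehat{V}(\sigma)_{\mathrm{gen}}$, and the equality $\dim\Hom_{V_{(c^k)}}(\rho,\psi)=1$. Admissibility of $\rho$ is automatic, being a parabolically induced representation, in both the $p$-adic and the archimedean setting.

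For the vanishing I would argue through wave-front sets. Since each $\rho_i$ is $(k_i,c)$, it carries no generic functional against $V(\sigma_i)$ for any partition $\sigma_i\succsim(k_i^c)$; by the theorem of Moeglin--Waldspurger relating degenerate Whittaker models to the wave-front set, and its archimedean analogue, this forces $\mathrm{WF}(\rho_i)\subseteq\overline{\mathcal{O}_{(k_i^c)}}$. The wave-front set of a parabolically induced representation is contained in the closure of the Lusztig--Spaltenstein induction of the wave-front sets of the inducing data, and for $\GL_n$ the induction of the orbits $(k_i^c)$ from $\prod_i\GL_{k_ic}$ to $\GL_{kc}$ is the sum of partitions $(k_1^c)+\cdots+(k_d^c)=(k^c)$; hence $\mathrm{WF}(\rho)\subseteq\overline{\mathcal{O}_{(k^c)}}$. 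A nonzero element of $\Hom_{V(\sigma)}(\rho,\psi')$ with $\psi'$ generic would, again by Moeglin--Waldspurger, place $\mathcal{O}_\sigma$ in $\overline{\mathrm{WF}(\rho)}=\overline{\mathcal{O}_{(k^c)}}$, that is, $\sigma\le(k^c)$, contradicting $\sigma\succsim(k^c)$. This is the one step that uses the full strength of the hypothesis that $\rho_i$ is $(k_i,c)$, rather than just the existence of a $(k_i,c)$ functional.

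For $\dim\Hom_{V_{(c^k)}}(\rho,\psi)=1$, observe that the composition $(c^k)$ refines $\beta c$, so $P_{(c^k)}\subseteq P_{\beta c}$; hence $V_{\beta c}\subseteq V_{(c^k)}$ with $V_{(c^k)}=V_{\beta c}\rtimes\prod_{i=1}^d V_{(c^{k_i})}$, the $i$-th factor lying in the $i$-th block $\GL_{k_ic}$ of $M_{\beta c}$, and $\psi$ restricts to $\otimes_i\psi_i$ (the $(k_i,c)$-character of $\GL_{k_ic}$) on $\prod_i V_{(c^{k_i})}$ and to a nontrivial character $\psi'$ of $V_{\beta c}$ supported on the $d-1$ ``seam'' $c\times c$ blocks between consecutive factors $\GL_{k_ic}$. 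For the lower bound, since each $\rho_i$ surjects onto its $(k_i,c)$ model it is enough to exhibit the functional on $\Ind_{P_{\beta c}}^{\GL_{kc}}\big(\otimes_i W_{\psi_i}(\rho_i)\big)$, where it is given by a Jacquet-type integral built from $\otimes_i\lambda_i$ and integration over a unipotent subgroup, exactly as Whittaker functionals are constructed on parabolically induced representations; this integral converges over $p$-adic fields, and, over archimedean fields with $\rho_i=\tau_i\circ\det$, after the customary analytic continuation. For the upper bound I would run the Bernstein--Zelevinsky geometric lemma on $\Ind_{P_{\beta c}}^{\GL_{kc}}(\otimes_i\rho_i)$: its twisted Jacquet module $J_{V_{\beta c},\psi'}$ carries a filtration indexed by certain double cosets in $W_{\GL_{kc}}$, and applying $\Hom_{\prod_i V_{(c^{k_i})},\,\otimes_i\psi_i}$ to the subquotients computes $\Hom_{V_{(c^k)}}(\rho,\psi)$. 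Nontriviality of $\psi'$ on $V_{\beta c}$ already discards all double cosets incompatible with the seam character; for each surviving one other than a single distinguished coset $w_0$, the resulting contribution is a $\Hom$-space of twisted Jacquet modules of the $\rho_i$ that, by the standard relation between twisted Jacquet modules and degenerate Whittaker functionals, is nonzero only if some $\rho_i$ admits a functional against a $V(\sigma_i)$ with $\sigma_i\succsim(k_i^c)$, hence it vanishes; whereas the contribution of $w_0$ is $\bigotimes_i\Hom_{V_{(c^{k_i})},\psi_i}(\rho_i)$, which is one-dimensional. In the archimedean case the geometric lemma is replaced by the Bruhat filtration of the degenerate principal series $\Ind_{P_{(c^k)}}^{\GL_{kc}}(\otimes_i\tau_i\circ\det)$ by distributions supported on $P_{(c^k)}$-orbits, and the same orbit-by-orbit analysis applies; this is precisely why one restricts there to $k_i=1$ and $\rho_i=\tau_i\circ\det$.

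The main obstacle is the combinatorial heart of the upper bound: parametrizing the double cosets that survive the constraints imposed by $\psi'$ together with $\otimes_i\psi_i$, and showing that each one except $w_0$ forces, after passing to an appropriate twisted Jacquet module, a nonzero functional against some $V(\sigma_i)$ with $\sigma_i\succsim(k_i^c)$ on one of the inducing data, so that it is annihilated by the $(k_i,c)$-property. Reconciling this double-coset combinatorics with the dominance order on partitions is the crux. A secondary technical point, in the archimedean setting, is controlling convergence and the extension problem along the Bruhat filtration, which is manageable because the inducing data are one-dimensional.
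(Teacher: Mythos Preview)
Your strategy is sound in outline but takes a genuinely different route from the paper. The paper does \emph{not} use wave-front sets, Lusztig--Spaltenstein induction, or a Bruhat/geometric-lemma orbit analysis at all. Instead it works entirely through Bernstein--Zelevinsky derivatives (and their archimedean analogues due to Aizenbud--Gourevitch--Sahi), combined with the dictionary between highest derivatives and degenerate Whittaker models from Gomez--Gourevitch--Sahi. Concretely: from the $(k_i,c)$ hypothesis and \cite[Theorems~E,~F]{GGS} one reads off that the highest derivative of $\rho_i$ is $\rho_i^{(k_i)}$; the Leibniz rule (\cite[Lemma~4.5]{BZ2} in the $p$-adic case, \cite[Theorem~B]{AGS2015b} and \cite[Corollary~2.4.4]{AGS2015} in the archimedean case) then gives that the highest derivative of $\rho$ is $\rho^{(k)}=\Ind_{P_{\beta(c-1)}}^{\GL_{k(c-1)}}(\otimes_i\rho_i^{(k_i)})$. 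Iterating $c$ times lands on a one-dimensional space, and one more appeal to \cite[Theorems~E,~F]{GGS} converts this into exactly the two statements in the definition of $(k,c)$: uniqueness of the model and vanishing for $\sigma\succsim(k^c)$ (the latter because $\rho^{(k+1)}=0$).

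What each approach buys: the derivative argument is short, uniform, and completely avoids the double-coset combinatorics you flag as your ``main obstacle''; the Leibniz rule is doing that bookkeeping implicitly. Your wave-front argument for the vanishing is a clean and correct alternative, and your Jacquet-integral construction for existence is fine (and is in fact how the paper realizes the functional later, in \S3.1). The weak point is your upper bound: the assertion that every surviving double coset other than $w_0$ forces on some $\rho_i$ a functional for a partition $\sigma_i\succsim(k_i^c)$ is plausible by analogy with Rodier's $c=1$ case, but for general $c$ the subquotients coming from the geometric lemma involve Jacquet modules of the $\rho_i$ along various parabolics, and translating their nonvanishing into a degenerate Whittaker condition for a specific partition $\sigma_i$ is not automatic. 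You would need an argument of the type ``a nonzero Jacquet module along this parabolic, twisted by this character, implies a model for an orbit strictly above $(k_i^c)$,'' and that step is where real work hides. The paper's derivative approach is precisely designed to bypass this.
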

\begin{proof}
We need to prove $\Hom_{V(\beta')}(\rho,\psi')=0$ for any partition $\beta'\succsim(k^c)$ and character $\psi'\in \widehat{V}(\beta')_{\mathrm{gen}}$, and $\dim\Hom_{V_{(c^k)}}(\rho,\psi)=1$.

We will use the theory of detivatives of Bernstein and Zelevinsky \cite{BZ1,BZ2} over $p$-adic fields, its partial extension to archimedean fields by Aizenbud \textit{et. al.} \cite{AGS2015,AGS2015b}, and the relation between derivatives and degenerate Whittaker models developed (over both fields) by Gomez \textit{et. al.} \cite{GGS}.
Let $P_l$ be the subgroup of matrices $g\in\GL_l$ with the last row $(0,\ldots,0,1)$ ($P_l<P_{(l-1,1)}$) and let $\psi_l$ be the character of $V_{(l-1,1)}$ given by $\psi_l(\left(\begin{smallmatrix}I_{l-1}&v\\&1\end{smallmatrix}\right))=\psi(v_{l-1})$.
Then we have the functor $\Phi^-$ from (smooth) representations of $P_l$ to representations of $P_{l-1}$ given by $\Phi^-(\varrho)=J_{V_{(l-1,1)},\psi_l}(\varrho)$.
For $0<r\leq l$, the $r$-th derivative of a representation $\varrho$ of $\GL_l$ is defined over $p$-adic fields by
$\varrho^{(r)}=J_{V_{(l-1,1)}}((\Phi^-)^{r-1}(\varrho|_{P_l}))$, and over archimedean fields by
$\varrho^{(r)}=((\Phi^-)^{r-1}(\varrho|_{P_l}))|_{\GL_{n-r}}$ (more precisely this is the pre-derivative, we use the term derivative for uniformity). Also $\varrho^{(0)}=\varrho$. The highest derivative of $\varrho$ is the representation $\varrho^{(r_0)}$ such that $\varrho^{(r_0)}\ne0$ and $\varrho^{(r)}=0$ for all $r>r_0$.

According to the definition of $(k_i,c)$ representations and \cite[Theorems E, F]{GGS} (which also apply over $p$-adic fields),
the highest derivative of $\rho_i$ is $\rho_i^{(k_i)}$. Then
the highest derivative of $\rho$ is $\rho^{(k)}$ and
$\rho^{(k)}=\Ind_{P_{\beta(c-1)}}^{\GL_{k(c-1)}}(\otimes_{i=1}^d\rho_i^{(k_i)})$,
where over archimedean fields $\rho_i^{(k_i)}=\rho_i^{(1)}=\tau_i\circ\det$ and $\det$ is the determinant of $\GL_{c-1}$.
In the $p$-adic case this follows from \cite[Lemma~4.5]{BZ2}; in the archimedean case this follows from
\cite[Corollary~2.4.4]{AGS2015} and \cite[Theorem~B]{AGS2015b} (see also \cite[\S~4.4]{GGS}).
Now the highest derivative of $\rho^{(k)}$ is again its $k$-th derivative and we can repeat this process $c$ times
to obtain a one dimensional vector space. We conclude from \cite[Theorems E, F]{GGS} that $\rho$ admits a unique $(k,c)$ model.

If $\lambda\succsim(k^c)$, we can assume $\lambda_1>k$ then $\rho^{(k+1)}=0$, by \cite[Lemma~4.5]{BZ2} and
\cite[Theorem~B]{AGS2015b}. By \cite[Theorems E, F]{GGS}, this proves the required vanishing properties.
\end{proof}
\begin{remark}
The result for $p$-adic fields is stronger, because we have a general ``Leibniz rule" for derivatives (\cite[Lemma~4.5]{BZ2}).
\end{remark}

\subsection{The representation $\rho_c(\tau)$}\label{the representation rho_c(tau)}
Let $\tau$ be an irreducible generic representation of $\GL_k$ (for $k=1$, this means $\tau$ is a quasi-character of $F^*$). It is of type $(k,1)$, by the uniqueness of Whittaker models and because $(k)$ is the maximal unipotent orbit for $\GL_k$.
For any $c$, we construct a $(k,c)$ representation $\rho_c(\tau)$ as follows.
First assume $\tau$ is unitary.
For $\zeta\in\C^c$, consider the intertwining operator
\begin{align*}
M(\zeta,w_{(k^c)}):\Ind_{P_{(k^c)}}^{\GL_{kc}}(\otimes_{i=1}^c|\det|^{\zeta_i}\tau)\rightarrow\Ind_{P_{(k^c)}}^{\GL_{kc}}(\otimes_{i=1}^c|\det|^{\zeta_{c-i+1}}\tau).
\end{align*}
Given a section $\xi$ of $\Ind_{P_{(k^c)}}^{\GL_{kc}}(\otimes_{i=1}^c|\det|^{\zeta_i}\tau)$ which is a holomorphic function of $\zeta$,
$M(\zeta,w_{(k^c)})\xi$ is defined for $\Real(\zeta)$ in a suitable cone by the absolutely convergent integral
\begin{align*}
M(\zeta,w_{(k^c)})\xi(\zeta,g)=\int\limits_{V_{(k^c)}}\xi(\zeta,w_{(k^c)}^{-1}vg)\,dv,
\end{align*}
then by meromorphic continuation to $\C^c$.
Let $\rho_c(\tau)$ be the image of this operator at
\begin{align*}
\zeta=((c-1)/2,(c-3)/2,\ldots,(1-c)/2).
\end{align*}
Since $\tau$ is unitary, this image is well defined and irreducible by
Jacquet \cite[Proposition~2.2]{Jac4} (see also \cite[\S~I.11]{MW4}) and $\rho_c(\tau)$ is the unique irreducible quotient of
\begin{align}\label{rep:rho c tau as a quotient}
\Ind_{P_{(k^c)}}^{\GL_{kc}}((\tau\otimes\ldots\otimes\tau)\delta_{P_{(k^c)}}^{1/(2k)})
\end{align}
and the unique irreducible subrepresentation of
\begin{align}\label{rep:rho c tau as a subrep}
\Ind_{P_{(k^c)}}^{\GL_{kc}}((\tau\otimes\ldots\otimes\tau)\delta_{P_{(k^c)}}^{-1/(2k)}).
\end{align}

Now assume $\tau$ is an arbitrary irreducible generic representation of $\GL_k$. Then
$\tau=\Ind_{P_{\beta}}^{\GL_k}(\otimes_{i=1}^d|\det|^{a_i}\tau_i)$ where $\tau_i$ are tempered and $a_1>\ldots>a_d$
(by Langlands' classification and \cite{Vog78,Z3,JS}). Define
\begin{align}\label{rep:rho c tau in general}
\rho_c(\tau)=\Ind_{P_{\beta c}}^{\GL_{kc}}(\otimes_{i=1}^d|\det|^{a_i}\rho_c(\tau_i)).
\end{align}
Clearly $\rho_c(|\det|^{s_0}\tau)=|\det|^{s_0}\rho_c(\tau)$ for any $s_0\in\C$.

Note that when $\tau$ is unitary, the definition as the image of an intertwining operator agrees with the definition \eqref{rep:rho c tau in general}. Indeed in this case by Tadi{\'c} \cite{tadic86} and Vogan \cite{Vog86} we have
$\tau\cong\Ind_{P_{\beta'}}^{\GL_k}(\otimes_{i=1}^{d'}|\det|^{r_i}\tau_i')$, where $\tau_i'$ are square-integrable and $1/2>r_1\geq \ldots\geq r_{d'}>-1/2$. Then by \cite[\S~I.11]{MW4} we can permute the blocks in $\rho_c(\tau)$ corresponding to different representations $\tau'_i$ hence
\begin{align}\label{q:rho c tau for unitary in terms of square integrable}
\rho_c(\tau)\cong\Ind_{P_{\beta' c}}^{\GL_{kc}}(\otimes_{i=1}^{d'}|\det|^{r_i}\rho_c(\tau_i')).
\end{align}
In particular, e.g.,
\begin{align}\label{eq:rho c and ind rho c}
\rho_c(\Ind_{P_{(\beta'_1,\beta'_2)}}^{\GL_{\beta'_1+\beta'_2}}(\tau_1'\otimes\tau_2'))\cong
\Ind_{P_{(\beta'_1c,\beta'_2c)}}^{\GL_{(\beta'_1+\beta'_2)c}}(\rho_c(\tau_1')\otimes\rho_c(\tau_2')),
\end{align}
where the l.h.s.~ (left hand side) is defined as the image of the intertwining operator. Hence
if $a_1>\ldots>a_d$ are the $d\leq d'$ distinct numbers among $r_1,\ldots,r_{d'}$, $\tau_i$ is the tempered representation parabolically induced from $\otimes_{1\leq j\leq d'\,:\,r_j=a_i}\tau'_j$ and $\beta=(a_1,\ldots,a_d)$,
\begin{align*}
\rho_c(\tau)\cong\Ind_{P_{\beta c}}^{\GL_{kc}}(\otimes_{i=1}^{d}|\det|^{a_i}\rho_c(\tau_i)),
\end{align*}
which agrees with \eqref{rep:rho c tau in general}.
Moreover, \eqref{eq:rho c and ind rho c} implies that \eqref{rep:rho c tau in general} also holds when $a_1\geq\ldots\geq a_d$.

For example, if $\tau$ is irreducible unramified tempered, $\tau=\Ind_{B_{\GL_k}}^{\GL_k}(\otimes_{i=1}^k\tau_i)$ for
unramified unitary characters $\tau_i$ of $F^*$. By definition $\rho_c(\tau)$ is the unique irreducible unramified quotient of
\eqref{rep:rho c tau as a quotient}, but by \cite[\S~I.11]{MW4}, $\rho_c(\tau)=\Ind_{P_{(c^k)}}^{\GL_{kc}}(\otimes_{i=1}^k\tau_i\circ\det)$.

We extend the definition to certain unramified principal series, which are not necessarily irreducible. Assume
$\tau=\Ind_{B_{\GL_k}}^{\GL_k}(\otimes_{i=1}^k|~|^{a_i}\tau_i)$ ($k>1$) where $\tau_i$ are as above (e.g., unitary) but
$a_1\geq\ldots\geq a_k$ ($\tau$ is not a general unramified principal series because of the order). In this case $\tau$ still admits a unique Whittaker functional. We define $\rho_c(\tau)=\Ind_{P_{(c^k)}}^{\GL_{kc}}(\otimes_{i=1}^k|~|^{a_i}\tau_i\circ\det)$, which is a $(k,c)$ representation by Proposition~\ref{prop:rodier non}. Transitivity of induction and the example in the last paragraph imply that this definition coincides with \eqref{rep:rho c tau in general}, when $\tau$ is irreducible (in which case the order of $a_i$ does not matter).

\begin{theorem}\label{theorem:any rep is WSS}
Let $\tau$ be an irreducible generic representation of $\GL_k$.
Then $\rho_c(\tau)$ is $(k,c)$.
\end{theorem}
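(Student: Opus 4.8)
The plan is to argue by induction on $c$. For $c=1$ we have $\rho_1(\tau)=\tau$, which is $(k,1)$ by the uniqueness of the Whittaker model and the maximality of the orbit $(k)$ for $\GL_k$. For the inductive step I would begin with two reductions. By the Langlands classification write $\tau=\Ind_{P_\beta}^{\GL_k}(\otimes_i|\det|^{a_i}\tau_i)$ with $\tau_i$ tempered and $a_1>\dots>a_d$; realizing each tempered $\tau_i$ as a full parabolic induction of square-integrable representations, transitivity of induction together with \eqref{rep:rho c tau in general} and \eqref{eq:rho c and ind rho c} exhibits $\rho_c(\tau)$ as a parabolic induction of representations of the form $|\det|^{a}\rho_c(\delta)$ with $\delta$ square-integrable. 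Over $p$-adic fields, Proposition~\ref{prop:rodier non} then reduces the theorem to the case $\tau$ square-integrable (the unramified twist being immaterial, since $(k,c)$ is stable under twisting by $|\det|^{s}$), and over $\C$ — where every square-integrable representation is a character, so every irreducible generic $\tau$ is a full principal series from characters — Proposition~\ref{prop:rodier non} already proves the theorem. Over $\R$ the same reduction leaves one genuinely new case, $\tau$ a square-integrable representation of $\GL_2(\R)$, and since Proposition~\ref{prop:rodier non} cannot be used over $\R$ to reassemble blocks of rank $2$, the reassembly in the archimedean case must be carried out at the level of derivatives, using the archimedean ``Leibniz rule'' for highest derivatives (\cite[Corollary~2.4.4]{AGS2015}, \cite[Theorem~B]{AGS2015b}, \cite[\S~4.4]{GGS}).

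The heart of the argument is therefore the square-integrable case, where the statement to establish — again by induction on $c$ — is the derivative formula: $\rho_c(\tau)^{(r)}=0$ for $r>k$, and the highest derivative of $\rho_c(\tau)$ is $\rho_c(\tau)^{(k)}$, which equals $|\det|^{s}\rho_{c-1}(\tau)$ for the shift $s$ dictated by $\delta_{P_{(k^c)}}^{1/(2k)}$. Over $p$-adic fields this should come from the ladder structure of $\rho_c(\tau)$: if $\tau$ is the generalized Steinberg attached to a cuspidal $\mu$, then $\rho_c(\tau)$ is the Langlands quotient of the induced representation from the $c$ consecutive twists $|\det|^{(c+1)/2-j}\tau$, $1\le j\le c$, i.e. the representation attached to a rectangle of segments on the cuspidal line of $\mu$; the Lapid--M\'{\i}nguez and M\oe glin--Waldspurger derivative formulas then identify its highest derivative with a twist of the rectangle with one row deleted, which is exactly $\rho_{c-1}(\tau)$. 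Over $\R$, with $\tau$ a discrete series of $\GL_2(\R)$, I would instead prove the formula directly: realize $\rho_c(\tau)$ via its inductive description (as the Langlands quotient of a suitably twisted induction of $\rho_{c-1}(\tau)$ with $\tau$), restrict to the mirabolic, and compute the functors $\Phi^-$ and the passage to the irreducible quotient by hand, using the explicit representation theory of $\GL_2(\R)$ and $\GL_3(\R)$ and the exactness of the derivative functor.

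Granting the derivative formula, the theorem follows from the relation between derivatives and degenerate Whittaker models, \cite[Theorems E, F]{GGS} (valid over both fields), exactly as in the proof of Proposition~\ref{prop:rodier non}: iterating the highest-derivative step $c$ times — using the inductive hypothesis that $\rho_{c-1}(\tau)$ is $(k,c-1)$ with highest derivative a twist of $\rho_{c-2}(\tau)$, and so on down to the $k$-th derivative of $\rho_1(\tau)=\tau$, which is one-dimensional — gives $\dim\Hom_{V_{(c^k)}}(\rho_c(\tau),\psi)=1$, while the vanishing $\rho_c(\tau)^{(r)}=0$ for $r>k$ at every stage of this tower gives $\Hom_{V(\sigma)}(\rho_c(\tau),\psi')=0$ for all $\sigma\succsim(k^c)$. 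The shifts $|\det|^{s}$ appearing along the way are harmless, being trivial on every $V(\sigma)$, and the general (non-square-integrable) case is recovered from the first paragraph, by Proposition~\ref{prop:rodier non} over $p$-adic fields and by the derivative-level reassembly over $\R$.

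The step I expect to be the main obstacle is the derivative formula of the second paragraph, and within it the archimedean case of a discrete series of $\GL_2(\R)$. Over $p$-adic fields one has a genuine Leibniz rule and the full ladder calculus, so the remaining work is a finite combinatorial check: ruling out the spurious subquotients that the lower derivatives $\rho_{c-1}(\tau)^{(i)}$ with $0<i\le k$ contribute to $\rho_c(\tau)^{(r)}$ for $k<r\le 2k$, by showing they lie in the kernel of the projection onto the irreducible quotient $\rho_c(\tau)$. Over $\R$ no comparable toolkit is available for the rank-$2$ blocks, so both the vanishing and the identification of the top derivative must be obtained directly with the coarser archimedean derivative formalism; this is the delicate point, and it is precisely why Proposition~\ref{prop:rodier non} was stated only for $\GL_1$-blocks over archimedean fields.
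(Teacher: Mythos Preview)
Your $p$-adic argument is essentially the paper's: reduce to $\tau$ square-integrable via \eqref{rep:rho c tau in general} and Proposition~\ref{prop:rodier non}, then invoke the derivative formula $\rho_c(\tau)^{(k)}=|\det|^{-1/2}\rho_{c-1}(\tau)$ and iterate, concluding with \cite[Theorems~E,~F]{GGS}. The paper cites Tadi{\'c} \cite[\S~6.1]{Tadic1986} directly for this formula (and also \cite{LMinq2014}), so the ``ruling out spurious subquotients'' step you anticipate is already absorbed into the literature; no extra combinatorics is needed.

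The archimedean case is where you and the paper diverge. You propose to establish the derivative formula for $\rho_c(\delta)$ with $\delta$ a discrete series of $\GL_2(\R)$ by a direct computation, then reassemble via the archimedean highest-derivative Leibniz rule; you correctly flag this as the main obstacle, and indeed the pre-derivative formalism of \cite{AGS2015,AGS2015b} does not give a clean route to this computation for $\rho_c(\delta)\subset\GL_{2c}(\R)$, nor does ``explicit representation theory of $\GL_2(\R)$ and $\GL_3(\R)$'' obviously suffice. The paper sidesteps the issue entirely and never computes an archimedean derivative of $\rho_c(\tau)$. For existence and vanishing it uses the wave-front set: combining \cite{David1991,SchmidVilonen2000} with \cite{GourevitchSahi2013,SahiStein1990} identifies $(k^c)$ as the maximal orbit in $WF(\rho_c(\tau))$ (even when $\rho_c(\tau)$ is reducible, by additivity of associated cycles), and then \cite[Theorem~E]{GGS} gives what is needed. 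For uniqueness it uses the fact, from \cite{BarbaschSahiSpeh1990,SahiStein1990}, that each $\rho_c(\tau_i)$ with $\beta_i=2$ is a \emph{quotient} of $\Ind_{P_{(c^2)}}^{\GL_{2c}}(\rho_c(\chi_1)\otimes\rho_c(\chi_2))$ for suitable quasi-characters $\chi_j$; hence $\rho_c(\tau)$ is a quotient of a full degenerate principal series built from $\GL_1$-blocks, to which Proposition~\ref{prop:rodier non} applies as stated. This last move---trading the rank-$2$ block for a degenerate principal series quotient---is precisely the idea that dissolves your obstacle.
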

\begin{proof}
First assume $F$ is non-archimedean. We start by proving the result for square-integrable representations $\tau$.
By Zelevinsky \cite{Z3}, $\tau$ can be described as the unique irreducible subrepresentation of \begin{align}\label{rep:square integrable tau}
\Ind_{P_{(r^d)}}^{\GL_{k}}((\tau_0\otimes\ldots\otimes\tau_0)\delta_{P_{(r^d)}}^{1/(2r)}),
\end{align}
where $\tau_0$ is an irreducible unitary supercuspidal representation of $\GL_r$. Then by Tadi{\'c} (\cite[\S~6.1]{Tadic1986})
the highest Bernstein--Zelevinsky derivative of $\rho_c(\tau)$ is its $k$-th derivative and equals $|\det|^{-1/2}\rho_{c-1}(\tau)$
(see also \cite{Z3,tadic86} and \cite[Theorem 14]{LMinq2014}).
Repeatedly taking highest derivatives, we see that $\rho_c(\tau)$ is supported on $(k^c)$. The uniqueness of the functional follows as in the proof of Proposition \ref{prop:rodier non}. The proof for an arbitrary (irreducible generic) $\tau$ now follows from
\eqref{rep:rho c tau in general} and Proposition \ref{prop:rodier non}.

Now consider an archimedean $F$, and an irreducible generic $\tau$. For a smooth representation $\vartheta$ of $\GL_l$ let $\mathcal{V}(\vartheta)$ denote its annihilator variety and $WF(\vartheta)$ be its wave-front set (see e.g., \cite{GourevitchSahi2013} for these notions). According to the results of Vogan \cite{David1991} and Schmid and Vilonen \cite{SchmidVilonen2000}, $\mathcal{V}(\vartheta)$ is the Zariski closure of $WF(\vartheta)$
(see \cite[\S~3.3.1]{GGS}).
Applying this to $\vartheta=\rho_c(\tau)$, by \cite[Corollary 2.1.8]{GourevitchSahi2013} and \cite[Theorem~3]{SahiStein1990}
(see \cite[\S~4.2]{GourevitchSahi2013}), $(k_1+\ldots+k_d)^c=(k^c)$ is the maximal orbit in $WF(\rho_c(\tau))$. Note that this result holds although $\rho_c(\tau)$ may be reducible (associated cycles are additive, see e.g., \cite[p.~323]{David1991}). Therefore $\Hom_{V_{(\beta')}}(\rho_c(\tau),\psi')=0$ for all $\beta'$ greater than or not comparable with $(k^c)$ and generic character $\psi'$ of $V_{(\beta')}$, and also \cite[Theorem~E]{GGS} implies that $\rho_c(\tau)$ admits a $(k,c)$ functional. It remains
to show $\dim\Hom_{V_{(\beta')}}(\rho_c(\tau),\psi')\leq1$.

To this end, by \cite{BarbaschSahiSpeh1990,SahiStein1990} (see also \cite[Corollary~4.2.5]{GourevitchSahi2013}), each $\rho_c(\tau_i)$ with $\beta_i>1$ is a quotient of
$\Ind_{P_{(c^2)}}^{\GL_{2c}}(\rho_c(\chi_1)\otimes\rho_c(\chi_2))$ for suitable quasi-characters $\chi_1,\chi_2$ of $F^*$, hence
$\rho_c(\tau)$ itself is also a quotient of a degenerate principal series
$\Ind_{P_{(c^k)}}^{\GL_{kc}}(\otimes_{i=1}^k\rho_c(\chi_i))$. The latter is $(k,c)$ by Proposition \ref{prop:rodier non}, thus
$\dim\Hom_{V_{(\beta')}}(\rho_c(\tau),\psi')\leq1$. We deduce that $\rho_c(\tau)$ is $(k,c)$.
\end{proof}
\begin{remark}
As explained in the introduction, this theorem completes the proof that the global integrals from \cite{CFGK2}
are Eulerian.
\end{remark}
For an admissible representation $\varrho$ of $\GL_{l}$, let $\varrho^*(g)=\varrho(J_{l}{}^tg^{-1}J_{l})$ where
$J_l=w_{(1^l)}$. If $\varrho$ is irreducible, $\varrho^*\cong\varrho^{\vee}$.
\begin{claim}\label{claim:rho c dual}
If $\tau$ is tempered, $\rho_c(\tau)^{\vee}=\rho_c(\tau^{\vee})$. In general if $\tau$ is irreducible,
$\rho_c(\tau)^*=\rho_c(\tau^{\vee})$.
\end{claim}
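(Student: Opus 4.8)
The plan is to reduce both assertions to the case of tempered $\tau$, where $\rho_c(\tau)$ is irreducible, and there to exploit that the functor $\varrho\mapsto\varrho^*$ is simply precomposition with the automorphism $g\mapsto J_{kc}\,{}^tg^{-1}J_{kc}$ of $\GL_{kc}$: it is covariant and exact, preserves irreducibility, sends (as recalled just before the claim) an irreducible representation to its contragredient, and commutes with normalized parabolic induction up to reversing the order of the inducing blocks,
\[
\bigl(\Ind_{P_{(n_1,\ldots,n_d)}}^{\GL_{n}}(\sigma_1\otimes\cdots\otimes\sigma_d)\bigr)^*\cong\Ind_{P_{(n_d,\ldots,n_1)}}^{\GL_{n}}(\sigma_d^*\otimes\cdots\otimes\sigma_1^*).
\]
The only genuinely computational input is a symmetry of the exponents entering the definition of $\rho_c$.

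First I would treat tempered $\tau$. Then $\tau$ is unitary, so $\rho_c(\tau)$ is irreducible by \cite[Proposition~2.2]{Jac4} and $\rho_c(\tau)^*\cong\rho_c(\tau)^{\vee}$; hence it suffices to prove $\rho_c(\tau)^*\cong\rho_c(\tau^{\vee})$. Recall that $\rho_c(\tau)$ is the unique irreducible quotient of \eqref{rep:rho c tau as a quotient}. The exponent of $\delta_{P_{(k^c)}}^{1/(2k)}$ on the $i$-th $\GL_k$-block of $M_{(k^c)}$ is $e_i=(c+1-2i)/2$; the sequence $(e_1,\ldots,e_c)$ is symmetric about $0$, and $-e_{c+1-i}=e_i$. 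Therefore, applying $*$ to \eqref{rep:rho c tau as a quotient}, reversing the (equal-size) blocks and using $\tau^*\cong\tau^{\vee}$, one returns to \eqref{rep:rho c tau as a quotient} with $\tau$ replaced by $\tau^{\vee}$. Since $*$ is exact and preserves irreducibility, $\rho_c(\tau)^*$ is then an irreducible quotient of the latter representation, so it equals its unique irreducible quotient $\rho_c(\tau^{\vee})$. (Conceptually this is just the rule for the contragredient of a Langlands quotient; the content is the symmetry of the $e_i$.) Combined with $\rho_c(\tau)^*\cong\rho_c(\tau)^{\vee}$, this gives the first assertion.

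Next I would treat a general irreducible generic $\tau$. Writing its Langlands data as $\tau=\Ind_{P_{\beta}}^{\GL_k}(\otimes_{i=1}^d|\det|^{a_i}\tau_i)$, with $\tau_i$ tempered, $\beta=(k_1,\ldots,k_d)$ and $a_1>\cdots>a_d$, equation \eqref{rep:rho c tau in general} gives $\rho_c(\tau)=\Ind_{P_{\beta c}}^{\GL_{kc}}(\otimes_{i=1}^d|\det|^{a_i}\rho_c(\tau_i))$. I would apply $*$, commute it past the induction (reversing the blocks and turning $|\det|^{a_i}$ into $|\det|^{-a_i}$), and use on each block that $\rho_c(\tau_i)$ is irreducible, so $\rho_c(\tau_i)^*\cong\rho_c(\tau_i^{\vee})$ by the tempered case. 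This yields
\[
\rho_c(\tau)^*\cong\Ind_{P_{(k_dc,\ldots,k_1c)}}^{\GL_{kc}}\bigl(\otimes_{i=1}^d|\det|^{-a_{d+1-i}}\rho_c(\tau_{d+1-i}^{\vee})\bigr).
\]
On the other hand $\tau^{\vee}\cong\Ind_{P_{(k_d,\ldots,k_1)}}^{\GL_k}(\otimes_{i=1}^d|\det|^{-a_{d+1-i}}\tau_{d+1-i}^{\vee})$, and since the $\tau_{d+1-i}^{\vee}$ are tempered and $-a_d>-a_{d-1}>\cdots>-a_1$, this is precisely the Langlands presentation of $\tau^{\vee}$; feeding it into \eqref{rep:rho c tau in general} identifies the right-hand side above with $\rho_c(\tau^{\vee})$. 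Hence $\rho_c(\tau)^*\cong\rho_c(\tau^{\vee})$, completing the claim.

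The argument is essentially formal once the bookkeeping is organized; the steps that need care are the precise behaviour of normalized parabolic induction under $*$ (reversal of the inducing blocks together with negation of the $|\det|$-twists), the elementary identity $-e_{c+1-i}=e_i$, and matching the inducing data obtained for $\rho_c(\tau)^*$ with the Langlands presentation of $\tau^{\vee}$ so that one lands on $\rho_c(\tau^{\vee})$ on the nose, rather than merely up to permutation of inducing data. I expect this last, essentially notational, point to be the main obstacle. No archimedean/$p$-adic distinction is needed, as the inputs — $\rho_c(\tau)$ irreducible for unitary $\tau$, the Langlands classification, and \eqref{rep:rho c tau in general} — hold uniformly over $F$.
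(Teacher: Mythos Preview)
Your proof is correct and follows essentially the same approach as the paper's. The only cosmetic difference is in the tempered case: the paper takes the contragredient of \eqref{rep:rho c tau as a quotient} directly (so $\rho_c(\tau)^{\vee}$ becomes an irreducible \emph{subrepresentation} of \eqref{rep:rho c tau as a subrep} for $\tau^{\vee}$, which is unique), whereas you apply the covariant functor $*$ and use the symmetry $-e_{c+1-i}=e_i$ to see that $\rho_c(\tau)^*$ is an irreducible \emph{quotient} of \eqref{rep:rho c tau as a quotient} for $\tau^{\vee}$; both routes invoke the same uniqueness, and the general case is handled identically via \eqref{rep:rho c tau in general} and the compatibility of $*$ with parabolic induction.
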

\begin{proof}
The first assertion follows because $\rho_c(\tau)$ is a quotient of \eqref{rep:rho c tau as a quotient}, hence
both $\rho_c(\tau)^{\vee}$ and $\rho_c(\tau^{\vee})$ are irreducible subrepresentations of
$\Ind_{P_{(k^c)}}^{\GL_{kc}}((\tau^{\vee}\otimes\ldots\otimes\tau^{\vee})\delta_{P_{(k^c)}}^{-1/(2k)})$, but there is a unique such. The general case follows from
the definition, the tempered case and the fact that for any composition $\beta$ of $l$,
$(\Ind_{P_{\beta}}^{\GL_l}(\otimes_{i=1}^d\varrho_{i}))^*=\Ind_{P_{(\beta_d,\ldots,\beta_1)}}^{\GL_l}(\otimes_{i=1}^d\varrho_{d-i+1}^*)$.
\end{proof}
While \eqref{rep:rho c tau in general} may be reducible, it is still of finite length and
admits a central character.
We mention that since the Jacquet functor is exact over non-archimedean fields, and the generalized Whittaker functor of
\cite{GGS} is exact over archimedean fields (\cite[Corollary~G]{GGS}), $\rho_c(\tau)$ admits a unique irreducible subquotient which is a $(k,c)$ representation.

\section{Realizations of $(k,c)$ functionals}
\subsection{Explicit $(k,c)$ functionals from compositions of $k$}\label{k c functionals from partitions of k}
Let $\tau$ be an irreducible generic representation of $\GL_k$, where $k>1$. If $\tau$ is not supercuspidal, it is a quotient of
$\Ind_{P_{\beta}}^{\GL_k}(\tau_{\beta})$ for a nontrivial composition $\beta$ of $k$ and an irreducible generic representation $\tau_{\beta}$ (e.g., one can take $\tau_{\beta}$ to be supercuspidal). A standard technique for realizing the Whittaker model of $\tau$ is to write down the Jacquet integral on the induced representation (this integral stabilizes in the $p$-adic case). Since $\Ind_{P_{\beta}}^{\GL_k}(\tau_{\beta})$ also affords a unique Whittaker model, the functional on the induced representation factors through $\tau$. One may also twist the inducing data $\tau_{\beta}$ using auxiliary complex parameters, to obtain an absolutely convergent integral which admits an analytic continuation in these parameters. See e.g., \cite{Sh2,JPSS,Soudry,Soudry2}.

We generalize this idea to some extent, for $(k,c)$ functionals.
\begin{lemma}\label{lemma:rho kc in tau1 tau2}
If $\tau=\Ind_{P_{\beta}}^{\GL_k}(\tau_{\beta})$ with $\tau_{\beta}=\otimes_{i=1}^d\tau_i$, $\tau_i=|\det|^{a_i}\tau_{0,i}$, $a_1\geq\ldots\geq a_d$ and each $\tau_{0,i}$ is square-integrable, or $\tau$ is the essentially square-integrable quotient of $\Ind_{P_{\beta}}^{\GL_k}(\tau_{\beta})$ and $\tau_{\beta}$ is irreducible supercuspidal (this includes the case $\beta=(1^k)$, i.e., $\tau_{\beta}$ is a character of $T_{\GL_k}$, over any local field), then $\rho_c(\tau)$ is a quotient of $\Ind_{P_{\beta c}}^{\GL_{kc}}(\otimes_{i}\rho_c(\tau_i))$.
\end{lemma}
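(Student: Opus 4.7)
The plan is to split into the two cases of the hypothesis and handle them by different methods.

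In Case 1, where $\tau\cong\Ind_{P_\beta}^{\GL_k}(\tau_\beta)$ is presented as a full parabolic induction with square-integrable $\tau_{0,i}$ and weakly decreasing $a_i$, I would directly invoke equation \eqref{rep:rho c tau in general} as extended to weak inequalities (the extension explained in the paragraph following \eqref{q:rho c tau for unitary in terms of square integrable}). This yields
\[
\rho_c(\tau)\;\cong\;\Ind_{P_{\beta c}}^{\GL_{kc}}\!\Bigl(\bigotimes_{i=1}^d |\det|^{a_i}\rho_c(\tau_{0,i})\Bigr),
\]
and since $\rho_c(\tau_i)=\rho_c(|\det|^{a_i}\tau_{0,i})=|\det|^{a_i}\rho_c(\tau_{0,i})$, this coincides with $\Ind_{P_{\beta c}}^{\GL_{kc}}(\otimes_i\rho_c(\tau_i))$, so $\rho_c(\tau)$ is a quotient (in fact isomorphic).

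Case 2, where $\tau$ is the essentially square-integrable quotient of $\Ind_{P_\beta}^{\GL_k}(\tau_\beta)$ with $\tau_\beta$ irreducible supercuspidal, requires more work. My plan is: (i) reduce to $\tau$ square-integrable by factoring out a $|\det|^{s}$ twist and using $\rho_c(|\det|^{s}\tau')=|\det|^{s}\rho_c(\tau')$; (ii) invoke Zelevinsky's classification to express $\tau_\beta$ as a segment (so all $\beta_i=r$ for a common $r$, each $\tau_i=|\det|^{a_i}\tau_0$ for a fixed unitary irreducible supercuspidal $\tau_0$ of $\GL_r$, with the $a_i$ in the unique order making $\tau$ the Langlands quotient); and (iii) identify $\rho_c(\tau)$ with the Langlands quotient of
\[
\Pi\;:=\;\Ind_{P_{\beta c}}^{\GL_{kc}}\!\Bigl(\bigotimes_{i=1}^d |\det|^{a_i}\rho_c(\tau_0)\Bigr)\;=\;\Ind_{P_{\beta c}}^{\GL_{kc}}\!\Bigl(\bigotimes_{i=1}^d\rho_c(\tau_i)\Bigr).
\]
Over a non-archimedean field I would execute step (iii) via Bernstein--Zelevinsky derivatives, applying the Leibniz rule \cite[Lemma~4.5]{BZ2} to the Speh blocks $\rho_c(\tau_0)$ (whose derivatives are known by Tadi{\'c}) and comparing with the iterated highest derivative of $\rho_c(\tau)$, itself accessible through the formula $\rho_c(\tau)^{(k)}=|\det|^{-1/2}\rho_{c-1}(\tau)$ recalled in the proof of Theorem~\ref{theorem:any rep is WSS}. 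Over archimedean $F$, where the hypothesis restricts $\beta=(1^k)$, the pre-derivative framework of \cite{AGS2015,AGS2015b,GGS} supplies the analogous machinery.

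The hard part will be step (iii) of Case 2, namely identifying $\rho_c(\tau)$ as the Langlands quotient of $\Pi$. This is essentially the assertion that the $\rho_c$ construction commutes with segments in the Zelevinsky classification, a compatibility implicit in the theory of Speh representations due to Moeglin--Waldspurger and Tadi{\'c} but requiring careful bookkeeping of orderings and intertwining directions in our setting. The abstract uniqueness of $(k,c)$ functionals provides the key leverage: by Proposition~\ref{prop:rodier non}, $\Pi$ is $(k,c)$ and thus has a one-dimensional space of $(k,c)$ functionals, while Theorem~\ref{theorem:any rep is WSS} gives the same for $\rho_c(\tau)$. Producing a surjection $\Pi\twoheadrightarrow\rho_c(\tau)$ therefore reduces to matching these two functionals, which can be accomplished either by the derivative computation above or by an explicit intertwining-operator analysis mirroring the construction behind \eqref{eq:rho c and ind rho c} and adapted to the segment setting.
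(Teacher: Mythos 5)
Your treatment of the first case (full induction with weakly decreasing exponents and square-integrable blocks) is exactly the paper's argument: apply \eqref{rep:rho c tau in general} together with the extension to weak inequalities furnished by \eqref{eq:rho c and ind rho c}. No issues there.

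For the essentially square-integrable case your approach diverges substantially from the paper, and it is here that the plan is under-specified. The paper simply quotes the relevant structure theorems: over a $p$-adic field it cites \cite[Theorem~7.1]{tadic86} (that the Speh representation attached to the Langlands quotient of a segment is itself the Langlands quotient of the induced representation from the Speh blocks), and over an archimedean field it cites \cite[Corollary~4.2.5]{GourevitchSahi2013} together with \cite{BarbaschSahiSpeh1990,SahiStein1990}. Your plan is, in effect, to re-derive Tadi\'c's theorem from scratch via Bernstein--Zelevinsky derivatives, and the crucial step (iii) --- identifying $\rho_c(\tau)$ as the Langlands quotient of $\Pi$ --- is left at the level of an outline. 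Two concrete points where it needs shoring up: (a) knowing that $\Pi$ and $\rho_c(\tau)$ each have a one-dimensional space of $(k,c)$ functionals does not by itself produce a surjection $\Pi\twoheadrightarrow\rho_c(\tau)$; uniqueness is only useful \emph{after} you know that $\rho_c(\tau)$ occurs as a quotient, which is precisely what must be proven. Likewise, matching highest derivatives of $\Pi$ and $\rho_c(\tau)$ gives a consistency check, not a quotient map --- one has $\Pi^{(k)}\supseteq\Ind_{P_{\beta(c-1)}}^{\GL_{k(c-1)}}(\otimes_i|\det|^{-1/2}\rho_{c-1}(\tau_i))$ whereas $\rho_c(\tau)^{(k)}=|\det|^{-1/2}\rho_{c-1}(\tau)$ is irreducible, so the two do not coincide and the inductive comparison requires the quotient relation one is trying to establish. (b) Over archimedean fields the Leibniz rule for derivatives fails (the paper's remark following Proposition~\ref{prop:rodier non} flags exactly this), so your ``pre-derivative framework'' fallback cannot simply mirror the $p$-adic computation; this is why the paper instead invokes the structure of archimedean Speh representations from \cite{GourevitchSahi2013,BarbaschSahiSpeh1990,SahiStein1990}. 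In short, the compatibility you correctly identify as ``implicit in the theory of Speh representations due to Moeglin--Waldspurger and Tadi\'c'' is exactly the cited theorem, and the paper uses it as a black box rather than re-proving it; your proposal would need to fill in a genuine proof of that compatibility in both the $p$-adic and archimedean settings.
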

\begin{proof}
In the first case, this is true by \eqref{rep:rho c tau in general} and \eqref{eq:rho c and ind rho c}. For the essentially square-integrable case, over an archimedean field the result follows from \cite[Corollary~4.2.5]{GourevitchSahi2013} (see the proof of Theorem~\ref{theorem:any rep is WSS}), and if $F$ is $p$-adic from \cite[Theorem~7.1]{tadic86}.
\end{proof}
Take $\beta$ as in the lemma. If $F$ is archimedean assume $\beta=(1^k)$, i.e., $\rho_c(\tau)$ is a quotient of a degenerate principal series, which is always possible (see the proof of Theorem~\ref{theorem:any rep is WSS}). Denote $\beta=(\beta_1,\ldots,\beta_d)$, $\beta'=(\beta_d,\ldots,\beta_1)$ and consider the following Jacquet integral
\begin{align}\label{eq:(k,c)  functional using an integral}
\int\limits_{V_{\beta'c}}\xi(w_{\beta c}v)\psi^{-1}(v)\,dv,
\end{align}
where $\xi$ lies in the space of $\Ind_{P_{\beta c}}^{\GL_{kc}}(\otimes_{i=1}^dW_{\psi}(\rho_c(\tau_i)))$ and regarded as a complex-valued function, and $\psi$ is the restriction of \eqref{eq:k c character} to $V_{\beta'c}$. The integral \eqref{eq:(k,c)  functional using an integral} is formally a $(k,c)$ functional on the full induced space. Twist the inducing data and induce from $\otimes_{i=1}^d|\det|^{\zeta_i}\rho_c(\tau_i)$, for fixed $\zeta=(\zeta_1,\ldots,\zeta_d)\in\C^d$ with $\Real(\zeta_i-\zeta_{i+1})\gg0$ for all $i<d$. (Note that $W_{\psi}(|\det|^{\zeta_i}\rho_c(\tau_i))=|\det|^{\zeta_i}W_{\psi}(\rho_c(\tau_i))$).

Both $\Ind_{P_{\beta c}}^{\GL_{kc}}(\otimes_{i=1}^d\rho_c(\tau_i))$ and $\Ind_{P_{\beta c}}^{\GL_{kc}}(\otimes_{i=1}^d|\det|^{\zeta_i}\rho_c(\tau_i))$
are $(k,c)$ representations, by Proposition~\ref{prop:rodier non}. The condition on $\zeta$ implies that the integral is absolutely convergent for all $\xi$ (see e.g., \cite[Lemma~2.1]{Soudry2}). If we let $\zeta$ vary and $\xi$ is analytic in $\zeta$, the integral admits analytic continuation, which over archimedean fields is continuous in the data $\xi$. Over $p$-adic fields this follows from Bernstein's continuation principle (the corollary in \cite[\S~1]{Banks}), since we have uniqueness and the integral can be made constant. Over archimedean fields this follows from Wallach \cite{Wall88b,Wallach2006} ($P_{(c^k)}$ is ``very nice" and we induce from a degenerate principal series).
For any $\zeta_0\in\C^d$ one can choose data such that the continuation of \eqref{eq:(k,c)  functional using an integral} is nonzero at $\zeta=\zeta_0$. Taking $\zeta_0=0$ we obtain a $(k,c)$ functional, which is unique (up to scaling). Hence this functional factors through $\rho_c(\tau)$ and provides a realization of $W_{\psi}(\rho_c(\tau))$.

\subsection{Explicit $(k,c)$ functionals from compositions of $c$}\label{k c functionals from partitions of c}
Let $\tau$ be an irreducible generic representation of $\GL_k$, and assume an unramified twist of $\tau$ is unitary.
In this section we construct $(k,c)$ functionals on $\rho_c(\tau)$ using compositions of $c$.
Fix $0<l<c$. Since now both $\rho_l(\tau)$ and $\rho_{c-l}(\tau)$ embed in the corresponding spaces
\eqref{rep:rho c tau as a subrep}, $\rho_c(\tau)$ is a subrepresentation of
\begin{align}\label{rep:induced space 2 k c}
\Ind_{P_{(kl,k(c-l))}}^{\GL_{kc}}((W_{\psi}(\rho_l(\tau))\otimes W_{\psi}(\rho_{c-l}(\tau)))\delta_{P_{(kl,k(c-l))}}^{-1/(2k)}).
\end{align}
Both $(k,l)$ and $(k,c-l)$ models exist by Theorem~\ref{theorem:any rep is WSS}.
We may regard vectors in the space of \eqref{rep:induced space 2 k c} as complex-valued functions.
We construct a $(k,c)$ functional on \eqref{rep:induced space 2 k c}, and prove it does not vanish on any of its subrepresentations, in particular on $\rho_c(\tau)$.

Let $v\in V_{(c^k)}$ and set $v_{i,j}=\left(\begin{smallmatrix}v_{i,j}^1&v_{i,j}^2\\v_{i,j}^3&v_{i,j}^4\end{smallmatrix}\right)$, where $v_{i,j}^1\in\Mat_{l}$ and $v_{i,j}^4\in\Mat_{c-l}$. For $t\in\{1,\ldots,4\}$, let $V^t<V_{(c^k)}$ be the subgroup obtained by deleting the blocks $v_{i,j}^{t'}$ for all $i<j$ and $t'\ne t$, and $V=V^3$. Also define
\begin{align*}
&\kappa=\kappa_{l,c-l}=\left(\begin{smallmatrix}I_l\\0&0&I_l\\0&0&0&0&I_l&\ddots\\&&&&&&I_l&0\\0&I_{c-l}\\0&0&0&I_{c-l}&&\ddots\\&&&&&&&I_{c-l}\end{smallmatrix}\right)\in\GL_{kc}.
\end{align*}
\begin{example}
For $c=2$ (then $l=1$) and $k=3$,
\begin{align*}
\kappa=
\left(\begin{smallmatrix}
1\\
&&1\\
&&&&1\\
&1\\
&&&1\\
&&&&&1
\end{smallmatrix}\right),\qquad V=\left\{
\left(\begin{smallmatrix}
1\\
&1&v_{1,2}^3&&v_{1,3}^3\\
&&1\\&&&1&v_{2,3}^3\\&&&&1\\&&&&&1
\end{smallmatrix}\right)\right\}.
\end{align*}
\end{example}
Consider the functional on the space of \eqref{rep:induced space 2 k c},
\begin{align}\label{eq:mnk functional using w}
\xi\mapsto\int\limits_{V}\xi(\kappa v)\,dv.
\end{align}
This is formally a $(k,c)$ functional. Indeed, the conjugation $v\mapsto{}^{\kappa}v$ of $v\in V_{(c^k)}$ takes the blocks $v_{i,j}^1$ onto the subgroup $V_{(l^k)}$ embedded in the top left $kl\times kl$ block of $M_{(kl,k(c-l))}$. Then these blocks transform by the $(k,l)$ functional realizing $W_{\psi}(\rho_l(\tau))$;
$v_{i,j}^2$ is taken to $V_{(kl,k(c-l))}$ and $\xi$ is left-invariant on this group; and after the conjugation, the blocks $v_{i,j}^4$ form the subgroup $V_{((c-l)^k)}$ embedded in the bottom right $k(c-l)\times k(c-l)$ block, and transform by the $(k,c-l)$ functional realizing $W_{\psi}(\rho_{c-l}(\tau))$.
Thus $V_{(c^k)}$ transforms under \eqref{eq:k c character}. Also note that this conjugation takes $v_{i,j}^3$ to $V_{(kl,k(c-l))}^-$ (in particular $V$ is abelian).

For $v\in V$, $y={}^{\kappa}v\in V_{(kl,k(c-l))}^-$ is such that its bottom left $kl\times k(c-l)$ block takes the form
\begin{align}\label{block:V conjugated}
&\left(\begin{smallmatrix}0&v_{1,2}^3&\cdots&v_{1,k}^3\\\vdots&\ddots&\ddots&\vdots\\\vdots&&&v_{k-1,k}^3\\0&\cdots&\cdots&0\end{smallmatrix}\right),\qquad v_{i,j}^3\in\Mat_{(c-l)\times l}.
\end{align}
Let $y_{i,j}\in V_{(kl,k(c-l))}^-$ be obtained from $y$ be zeroing out all the blocks
in \eqref{block:V conjugated} except $v_{i,j}^3$. Also let $X<V_{(kl,k(c-l))}$ be the subgroup of matrices $x$ whose top right $kl\times k(c-l)$ block is
\begin{align*}
&\left(\begin{smallmatrix}
0&0&\cdots&0\\
\vdots&x_{1,2}^3&\ddots&\vdots\\
\vdots&\vdots&\ddots&0\\
0&x_{1,k}^3&\cdots&x_{k-1,k}^3\end{smallmatrix}\right),\qquad x_{i,j}^3\in\Mat_{l\times(c-l)}.
\end{align*}
Define $x_{i,j}$ similarly to $y_{i,j}$. Let $X_{i,j}$ and $Y_{i,j}$ be the respective subgroups of elements. Then
\begin{align}\label{eq:functional xi psi invariance}
\xi(y_{i,j}x_{i,j})=\psi(\mathrm{tr}(v_{i,j}^3x_{i,j}^3))\xi(y_{i,j}).
\end{align}
We show that \eqref{eq:mnk functional using w} can be used to realize $W_{\psi}(\rho_c(\tau))$.
\begin{lemma}\label{lemma:decomposition for V functionals, n=a+b}
For $0<l<c$, realize $\rho_c(\tau)$ as a subrepresentation of
\eqref{rep:induced space 2 k c}. The integral
\eqref{eq:mnk functional using w} is absolutely convergent, and is a (nonzero) $(k,c)$ functional
on $\rho_c(\tau)$.
\end{lemma}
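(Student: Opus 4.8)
The plan is to verify, in order: (i) absolute convergence of \eqref{eq:mnk functional using w}; (ii) that the integral defines an element of $\Hom_{V_{(c^k)}}(\cdot,\psi)$ on the space of \eqref{rep:induced space 2 k c}; and (iii) that it is nonzero on $\rho_c(\tau)$. Since \eqref{rep:induced space 2 k c} is obtained by inducing the $(k,l)$ and $(k,c-l)$ models, I would first unwind exactly how the conjugation by $\kappa$ distributes the blocks $v_{i,j}^t$ of $v\in V_{(c^k)}$, as sketched just before the lemma: the $t=1$ blocks land in the copy of $V_{(l^k)}$ inside the Levi and are absorbed into $W_\psi(\rho_l(\tau))$, the $t=4$ blocks land in $V_{((c-l)^k)}$ and are absorbed into $W_\psi(\rho_{c-l}(\tau))$, the $t=2$ blocks land in $V_{(kl,k(c-l))}$ (unipotent radical, left-invariance), and only the $t=3$ blocks — our integration variable $V=V^3$ — land in the opposite radical $V_{(kl,k(c-l))}^-$. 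This reduces (ii) to a formal computation: feeding $v'\in V_{(c^k)}$ into the integral, decompose $v'=v'_1v'_2v'_3v'_4$ according to the $V^t$, move each piece past $\kappa$, use the equivariance of the two smaller models and the left-invariance on $V_{(kl,k(c-l))}$ to collect the character \eqref{eq:k c character} on the first three factors, and change variables $v\mapsto v v'_3$ in the remaining integral over $V$ (legitimate because $V$ is abelian and the conjugation identifies $V^3\cong V$) to produce the missing contribution of $v'_3$ to \eqref{eq:k c character}. The relation \eqref{eq:functional xi psi invariance} is the local input that makes this bookkeeping go through.

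For convergence (i), I would compare the integrand against a gauge/majorant on the induced space: since $\rho_l(\tau)$ embeds in \eqref{rep:rho c tau as a subrep} (a sub of an induced-from-essentially-unitary space), and similarly for $\rho_{c-l}(\tau)$, and since the twist $\delta_{P_{(kl,k(c-l))}}^{-1/(2k)}$ is exactly the normalization appearing there, the matrix coefficients $\xi(\kappa v)$ restricted to $V$ decay like a product of local integrals of the form encountered in the standard Jacquet-integral convergence arguments for generic inducing data (cf. \cite[Lemma~2.1]{Soudry2}, \cite[Lemma~2.1]{Banks}-type estimates in the $p$-adic case; in the archimedean case one uses moderate-growth bounds). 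I would phrase this by bounding $|\xi(\kappa v)|$ by a Schwartz/rapidly-decreasing function of the entries $v_{i,j}^3$ after absorbing the $\psi$-twists, then invoking the cone condition implicit in the $-1/(2k)$ exponents; this is routine but needs the two smaller models to be realized concretely, which is exactly what Theorem~\ref{theorem:any rep is WSS} and its proof provide.

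Step (iii), nonvanishing on $\rho_c(\tau)$, is where I expect the real work. The cheap part is that \eqref{eq:mnk functional using w} is not identically zero on the full induced space \eqref{rep:induced space 2 k c}: choosing $\xi$ supported near $w_{\beta c}^{-1}$ (equivalently near $\kappa^{-1}$) with suitable values of the two smaller Whittaker functionals makes the integral a single nonzero evaluation, as in the classical stabilization argument for Jacquet integrals. The subtle point is to upgrade this to nonvanishing on the \emph{sub}representation $\rho_c(\tau)\subset$ \eqref{rep:induced space 2 k c}. For this I would use uniqueness: by Proposition~\ref{prop:rodier non} (with $k_1=kl$, $k_2=k(c-l)$) the space \eqref{rep:induced space 2 k c} is itself a $(k,c)$ representation, so $\dim\Hom_{V_{(c^k)}}(\text{\eqref{rep:induced space 2 k c}},\psi)=1$; hence the functional \eqref{eq:mnk functional using w}, being nonzero on the whole space, is \emph{the} $(k,c)$ functional. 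But a one-dimensional Hom space on a $(k,c)$ representation cannot be killed by passing to a nonzero subrepresentation whose quotient has no $(k,c)$ model — and here the relevant quotient of \eqref{rep:induced space 2 k c} by $\rho_c(\tau)$ has strictly smaller "size" (its wave-front set misses $(k^c)$, by the argument in the proof of Theorem~\ref{theorem:any rep is WSS} applied to the composition factors), so $\Hom_{V_{(c^k)}}(\text{quotient},\psi)=0$. Therefore the $(k,c)$ functional restricts nontrivially to $\rho_c(\tau)$, which is the assertion. The one thing to be careful about is ensuring the chosen test vector $\xi$ can be taken inside the subspace realizing $\rho_c(\tau)$ rather than only in \eqref{rep:induced space 2 k c}; the Hom-space dimension-count circumvents this, since it forces the restriction map $\Hom_{V_{(c^k)}}(\text{\eqref{rep:induced space 2 k c}},\psi)\to\Hom_{V_{(c^k)}}(\rho_c(\tau),\psi)$ to be injective, hence an isomorphism onto the (one-dimensional) target. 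This dimension/exactness argument — rather than an explicit computation on $\rho_c(\tau)$ — is the key step, and the main obstacle is verifying that the quotient carries no $(k,c)$ functional, which I would extract from the derivative/wave-front analysis already used for Theorem~\ref{theorem:any rep is WSS}.
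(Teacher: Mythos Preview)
Your step (iii) contains a genuine error. Proposition~\ref{prop:rodier non} does not apply to \eqref{rep:induced space 2 k c}: that proposition requires inducing from representations of type $(k_i,c)$ (fixed $c$, varying $k_i$), whereas here the inducing data $\rho_l(\tau)$ and $\rho_{c-l}(\tau)$ are of types $(k,l)$ and $(k,c-l)$ --- same $k$, different second parameter. Your proposed application with ``$k_1=kl$, $k_2=k(c-l)$'' would need $\rho_l(\tau)$ to be a $(kl,c)$ representation of $\GL_{klc}$, which it is not (it is a representation of $\GL_{kl}$). In fact the paper notes, immediately after the proof, that $(k,c)$ functionals on \eqref{rep:induced space 2 k c} are \emph{not} necessarily unique, and that nonvanishing on the subrepresentation $\rho_c(\tau)$ is \emph{not} a consequence of uniqueness. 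So the Hom-dimension/exactness argument you sketch cannot work here; the quotient-has-no-model claim is also unsupported, since the induced space \eqref{rep:induced space 2 k c} is not itself $(k,c)$ in the sense of the definition.

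The paper instead uses a root-elimination argument (cf.\ \cite[\S~7.2]{Soudry}, \cite[\S~6.1]{Jac5}). Via \eqref{eq:functional xi psi invariance} together with smoothness ($p$-adic) or Dixmier--Malliavin (archimedean), any $\xi$ is a finite sum of vectors obtained by integrating against a Schwartz function $\phi$ on a coordinate $X_{i,j}$; the $Y_{i,j}$-integration then converts this into $\widehat\phi$ on the corresponding block. Processing the blocks $y_{i,j}$ one at a time (in a suitable order, since $V$ is abelian) shows the integrand is a Schwartz function on ${}^\kappa V$. This gives absolute convergence and continuity simultaneously, and shows that for any nonzero subrepresentation $\mathcal{W}$ (in particular $\rho_c(\tau)$) one can arrange the integral to equal $\xi(I_{kc})\ne0$. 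Your convergence sketch (i) is also too vague to stand on its own: the gauge bounds you cite address convergence along unipotent radicals of the inducing parabolic, not along $V_{(kl,k(c-l))}^-$ as needed here; it is precisely the Fourier-transform mechanism that produces the required decay.
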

\begin{proof}
The proof technique is called ``root elimination", see e.g., \cite[Proposition~6.1]{Soudry}, \cite[\S~7.2]{Soudry} and \cite[\S~6.1]{Jac5} (also the proof of \cite[Lemma~8]{LR}).
We argue by eliminating each $y_{i,j}$ separately, handling the diagonals left to right, bottom to top: starting with $y_{k-1,k}$, next $y_{k-2,k-1}$, ..., up to $y_{1,2}$, then $y_{k-2,k}$, $y_{k-3,k-1}$, etc., with $y_{1,k}$ handled last.
Let $\mathcal{W}$ be a subrepresentation of \eqref{rep:induced space 2 k c}. For $\xi$ in the space of $\mathcal{W}$ and a Schwartz function $\phi$ on $\Mat_{l\times(c-l)}$ (over $p$-adic fields, Schwartz functions are in particular compactly supported), define
\begin{align*}
&\xi_{i,j}(g)=\int_{X_{i,j}}\xi(gx_{i,j})\phi(x_{i,j}^3)\,dx_{i,j},\\
&\xi'_{i,j}(g)=\int_{Y_{i,j}}\xi(gy_{i,j})\widehat{\phi}(v_{i,j}^3)\,dy_{i,j}.
\end{align*}
Here $\widehat{\phi}$ is the Fourier transform of $\phi$ with respect to $\psi\circ\mathrm{tr}$. By smoothness over $p$-adic fields, or by the
Dixmier--Malliavin Theorem \cite{DM}
over archimedean fields, any $\xi$ is a linear combination of functions
$\xi_{i,j}$, and also of functions $\xi'_{i,j}$. Using \eqref{eq:functional xi psi invariance}, the definition of the Fourier
transform and the fact that $V$ is abelian we obtain, for $(i,j)=(k-1,k)$,
\begin{align*}
\int_{Y_{i,j}}\xi_{i,j}(y_{i,j}{}^{\kappa}v^{\circ})\,dy_{i,j}=\xi'_{i,j}({}^{\kappa}v^{\circ}),
\end{align*}
where $v^{\circ}\in V$ does not contain the block of $v_{i,j}^3$. We re-denote $\xi=\xi'_{i,j}$, then proceed similarly
with $(i,j)=(k-2,k-1)$. This shows that the integrand is a Schwartz function on ${}^{\kappa}V$, thus
the integral \eqref{eq:mnk functional using w} is absolutely convergent. At the same time, the integral does
not vanish on $\mathcal{W}$ because in this process we can obtain $\xi(I_{kc})$. Over archimedean fields the same argument also implies
that \eqref{eq:mnk functional using w} is continuous (see \cite[\S~5, Lemma~2, p.~199]{Soudry3}).

Over $p$-adic fields we provide a second argument for the nonvanishing part. Choose $\xi_0$ in the space of $\mathcal{W}$ with $\xi_0(I_{kc})\ne0$. Define for a (large) compact subgroup $\mathcal{X}<X$, the function
\begin{align*}
\xi_1(g)=\int\limits_{\mathcal{X}}\xi(gx)\,dx,
\end{align*}
which clearly also belongs to the space of $\mathcal{W}$. We show that for a sufficiently large $\mathcal{X}$,
$\int\limits_{V}\xi_1({}^{\kappa}v)\,dv=\xi(I_{kc})$. Put $y={}^{\kappa}v$. We prove $\xi_1(y)=0$, unless $y$ belongs to a small compact neighborhood of the identity, and then
$\xi_1(y)=\xi(I_{kc})$. We argue by eliminating each $y_{i,j}$ separately, in the order stated above. Assume we have zeroed out all blocks on the diagonals to the left of $y_{i,j}$, and below $y_{i,j}$ on its diagonal. Let $\mathcal{B}$ denote the set of indices $(i',j')$ of the remaining $y_{i',j'}$ and $\mathcal{B}^{\circ}=\mathcal{B}-(i,j)$. Denote $\mathcal{X}_{i,j}=\mathcal{X}\cap X_{i,j}$ and assume that if $(i',j')\notin\mathcal{B}$, $\mathcal{X}_{i',j'}$ is trivial. Write $\mathcal{X}=\mathcal{X}^{\circ}\times \mathcal{X}_{i,j}$. For $(i',j')\in\mathcal{B}^{\circ}$, ${}^{y_{i',j'}}x_{i,j}=ux_{i,j}$, where $u\in V_{(l^k)}\times V_{((c-l)^k)}$ and the $(k,l)$ and $(k,c-l)$ characters
\eqref{eq:k c character} are trivial on $u$. Therefore by \eqref{eq:functional xi psi invariance},
\begin{align*}
\xi_1(y)=\int\limits_{\mathcal{X}^{\circ}}\xi(yx)\,dx\int\limits_{\mathcal{X}_{i,j}}\psi(\mathrm{tr}(v_{i,j}^3x_{i,j}^3))\,dx_{i,j}.
\end{align*}
The second integral vanishes unless $v_{i,j}^3$ is sufficiently small, then this integral becomes a nonzero measure constant. Moreover,
if $\mathcal{X}_{i,j}$ is sufficiently large with respect to $\xi$ and $\mathcal{X}_{i',j'}$ for all $(i',j')\in \mathcal{B}^{\circ}$,
then for any $x\in\mathcal{X}^{\circ}$, ${}^{y_{i,j}}x=xz$ where $z$ belongs to a small neighborhood of the identity in $\GL_{kc}$, on which $\xi$ is invariant on the right. Therefore we may remove $y_{i,j}$ from $y$ in the first integral.
Re-denote $\mathcal{X}=\mathcal{X}^{\circ}$. Repeating this process, the last step is for $y_{1,k}$ with an integral over $\mathcal{X}_{1,k}$, and we remain with $\xi(I_{kc})$.
\end{proof}

In contrast with the realization described in the previous section, here we regard $\rho_c(\tau)$ as a subrepresentation, therefore nonvanishing on $\rho_c(\tau)$ is not a consequence of uniqueness. In fact, the proof above implies
$(k,c)$ functionals on the space of \eqref{rep:induced space 2 k c} are not necessarily unique. On the other hand \eqref{eq:mnk functional using w} is already absolutely convergent (we do not need twists).

\subsection{Certain unramified principal series}\label{k c functionals unr principal}
One may consider integral~\eqref{eq:(k,c)  functional using an integral} (now with any $d\geq2$) also when $\tau=\Ind_{B_{\GL_k}}^{\GL_k}(\otimes_{i=1}^k\tau_i)$ is an unramified principal series (possibly reducible), written with a weakly decreasing order of exponents (see before Theorem~\ref{theorem:any rep is WSS}). Then $\rho_c(\tau)=\Ind_{P_{(c^k)}}^{\GL_{kc}}(\otimes_{i=1}^k\tau_i\circ\det)$. The integral admits analytic continuation in the twisting parameters $\zeta_1,\ldots,\zeta_k$, which is nonzero for all choices of $\zeta_i$. Hence it defines a $(k,c)$ functional on $\rho_c(\tau)$. In this setting we can prove a decomposition result similar to
Lemma~\ref{lemma:decomposition for V functionals, n=a+b} but using \eqref{eq:(k,c)  functional using an integral}.
The results of this section are essentially an elaborative reformulation of \cite[Lemma~22]{CFGK2}.

Put $\zeta=(\zeta_1,\ldots,\zeta_k)\in\C^k$.
Denote the representation $\Ind_{P_{(c^k)}}^{\GL_{kc}}(\otimes_{i=1}^k|~|^{\zeta_i}\tau_i\circ\det)$ by $\rho_c(\tau_{\zeta})$, with a minor abuse of notation (because we do not consider only $\zeta\in\R^k$ and $\zeta_1\geq\ldots\geq\zeta_k$).
Let $V(\zeta,\tau,c)$ be the space of $\rho_c(\tau_{\zeta})$. A section $\xi$ on $V(\tau,c)$ is a function $\xi:\C^k\times\GL_{kc}\rightarrow\C$ such that for all $\zeta\in\C^k$,
$\xi(\zeta,\cdot)\in V(\zeta,\tau,c)$, and we call it entire if
$\zeta\mapsto\xi(\zeta,g)\in\C[q^{\pm\zeta_1},\ldots,q^{\pm\zeta_k}]$, for all $g\in\GL_{kc}$. A meromorphic section is a function $\xi$ on
$\C^k\times\GL_{kc}$ such that $\varphi(\zeta)\xi(\zeta,g)$ is an entire section, for some holomorphic and not identically zero
$\varphi:\C^k\rightarrow\C$. The normalized unramified section $\xi^0$ is the section which is the normalized unramified vector for all $\zeta$.

Let $0<l<c$. We use the notation $\kappa$, $v_{i,j}$, $v_{i,j}^t$, $t\in\{1,\ldots,4\}$, $V^t$ and $V=V^3$ from the previous section.
Denote $Z={}^{(\diag(w_{(l^k)},w_{((c-l)^k)})\kappa)}V^2$. Define an intertwining operator
by the meromorphic continuation of the integral
\begin{align*}
m(\zeta,\kappa)\xi(\zeta,g)=\int\limits_{Z}\xi(\zeta,\kappa^{-1}zg)\,dz,
\end{align*}
where $\xi$ is a meromorphic section of $V(\tau,c)$. When $\xi$ is entire, this integral is absolutely convergent for $\Real(\zeta)$ in a cone
of the form $\Real(\zeta_1)\gg\ldots\gg\Real(\zeta_k)$, which depends only on the inducing characters.
\begin{lemma}\label{lemma:m zeta tau explicit}
Assume $1-q^{-s}\tau_i(\varpi)\tau_j^{-1}(\varpi)$ is nonzero for all $i<j$ and $\Real(s)\geq1$.
Then for all $\zeta$ with $\Real(\zeta_1)\geq\ldots\geq\Real(\zeta_k)$, $m(\zeta,\kappa)\xi^0(\zeta,\cdot)$ is well defined, nonzero and belongs to the space of
\begin{align}\label{rep:zeta unr rho c tau}
\Ind_{P_{(kl,k(c-l))}}^{\GL_{kc}}((\rho_l(\tau_{\zeta})\otimes\rho_{c-l}(\tau_{\zeta}))\delta_{P_{(kl,k(c-l))}}^{-1/(2k)}).
\end{align}
\end{lemma}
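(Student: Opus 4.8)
The plan is to compute the integral defining $m(\zeta,\kappa)\xi^0$ explicitly by unwinding it into a product of rank-one intertwining integrals acting on the normalized unramified vector. First I would identify the subgroup $Z={}^{(\diag(w_{(l^k)},w_{((c-l)^k)})\kappa)}V^2$ concretely: since $V^2$ consists of the $v_{i,j}^2$ blocks (sitting in $\Mat_{l\times(c-l)}$ for $i<j$) of $V_{(c^k)}$, and the conjugation by $\diag(w_{(l^k)},w_{((c-l)^k)})\kappa$ rearranges the $k$ copies of the $l$-block and the $k$ copies of the $(c-l)$-block into the standard parabolic $P_{(kl,k(c-l))}$, the group $Z$ is precisely the unipotent radical $V_{(kl,k(c-l))}$. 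Thus $m(\zeta,\kappa)$ is (up to the fixed conjugation by $\kappa$) the standard intertwining operator from $\rho_c(\tau_\zeta)=\Ind_{P_{(c^k)}}^{\GL_{kc}}(\otimes_i|~|^{\zeta_i}\tau_i\circ\det)$ attached to the Weyl element $w_{(kl,k(c-l))}$-type permutation that interleaves the $\det$-blocks, landing in an induced representation from $P_{(kl,k(c-l))}$ whose Levi factors are themselves degenerate principal series of $\GL_{kl}$ and $\GL_{k(c-l)}$.

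Next I would factor this intertwining operator. By the standard cocycle/decomposition of intertwining operators into rank-one pieces (Shahidi, Winarsky; over archimedean fields Wallach), $m(\zeta,\kappa)$ decomposes as a composition of elementary intertwining operators, each associated to a simple reflection crossing two adjacent $\det$-blocks coming from $|~|^{\zeta_i}\tau_i\circ\det$ and $|~|^{\zeta_j}\tau_j\circ\det$ with $i<j$. On the normalized unramified vector $\xi^0$, each such elementary operator acts by the Gindikin--Karpelevich scalar, which for a pair of unramified characters $|~|^{\zeta_i}\tau_i$, $|~|^{\zeta_j}\tau_j$ of $\GL_1$ inflated to $\GL_c$-blocks is a ratio of local $L$-factors; the relevant product of factors is $\prod_{m}\tfrac{1-q^{-(\zeta_i-\zeta_j)-m}\tau_i(\varpi)\tau_j(\varpi)^{-1}}{1-q^{-(\zeta_i-\zeta_j)-m-1}\tau_i(\varpi)\tau_j(\varpi)^{-1}}$ over the appropriate range of shifts $m$ coming from the relative position of the $c$-blocks. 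The hypothesis that $1-q^{-s}\tau_i(\varpi)\tau_j(\varpi)^{-1}\ne0$ for all $i<j$ and $\Real(s)\ge1$ guarantees that in the region $\Real(\zeta_1)\ge\ldots\ge\Real(\zeta_k)$ every denominator appearing in this product is nonzero (the shifts that occur are all $\ge 1$ in real part), so $m(\zeta,\kappa)\xi^0(\zeta,\cdot)$ is well defined (no pole), the scalar is nonzero (no numerator forces it to vanish, as those would require $\Real(\zeta_i-\zeta_j)<0$), and hence $m(\zeta,\kappa)\xi^0$ is a nonzero multiple of the normalized unramified vector of the target. That the target is the space \eqref{rep:zeta unr rho c tau} follows from transitivity of induction: $\Ind_{P_{(c^k)}}^{\GL_{kc}}$ factors through $\Ind_{P_{(kl,k(c-l))}}^{\GL_{kc}}$ of $\Ind_{P_{(c^l)}}^{\GL_{kl}}\otimes\Ind_{P_{(c^{k-l})}}^{\GL_{k(c-l)}}$-type data, and after the Weyl conjugation the inner inductions are exactly $\rho_l(\tau_\zeta)$ and $\rho_{c-l}(\tau_\zeta)$, while the modulus-character bookkeeping produces the twist $\delta_{P_{(kl,k(c-l))}}^{-1/(2k)}$ — this last normalization I would verify by comparing $\delta_{P_{(c^k)}}$ restricted to the relevant Levi with the product of $\delta_{P_{(c^l)}}$, $\delta_{P_{(c^{k-l})}}$ and $\delta_{P_{(kl,k(c-l))}}$.

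The main obstacle is the bookkeeping in the factorization: one must track precisely which shifts $m$ appear in the Gindikin--Karpelevich product when two $\GL_c$-blocks, each a $\det$-twist, are interchanged inside $\GL_{2c}$, and verify that under $\Real(\zeta_i)\ge\Real(\zeta_j)$ together with the hypothesis on $\tau_i\tau_j^{-1}$ these shifts land in the half-plane where both numerator and denominator are controlled. I expect the cleanest route is to reduce, via transitivity and the $\GL_2$-embedding arguments already used in the paper (cf.\ the reduction to $\Ind_{P_{(c^2)}}^{\GL_{2c}}$ in the proof of Theorem~\ref{theorem:any rep is WSS}), to the single pair case $k=2$ and then to the rank-one computation, invoking \cite[\S~I.11]{MW4} (and over archimedean fields Wallach \cite{Wall88b,Wallach2006}) to justify the interchange of blocks and the identification of the image with \eqref{rep:zeta unr rho c tau}; the hypothesis of the lemma is then exactly the nonvanishing-of-denominators condition read off from that rank-one $L$-factor ratio. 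Finally I would note that absolute convergence of the defining integral in the stated cone, needed to make the meromorphic continuation meaningful, is the standard Gindikin--Karpelevich estimate and was already asserted in the paragraph preceding the lemma.
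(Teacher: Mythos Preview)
Your overall strategy---factor $m(\zeta,\kappa)$ into rank-one intertwining operators, apply the Gindikin--Karpelevich formula to the normalized unramified vector, and check that the hypothesis rules out zeros and poles---is exactly the paper's approach. But two concrete steps in your setup are wrong, and the paper's key simplifying move is missing from your outline.

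First, your identification $Z=V_{(kl,k(c-l))}$ is false: $V^2$ has dimension $\binom{k}{2}l(c-l)$ (only the blocks $v_{i,j}^2$ with $i<j$), whereas $V_{(kl,k(c-l))}$ has dimension $k^2l(c-l)$. So $Z$ is a proper subgroup of that radical, and $m(\zeta,\kappa)$ is \emph{not} the standard long intertwining operator attached to $P_{(kl,k(c-l))}$. Second, your transitivity argument for the target is muddled: $P_{(c^k)}$ is not contained in $P_{(kl,k(c-l))}$ (the composition $(c^k)$ does not refine $(kl,k(c-l))$ in general), so you cannot factor the induction that way. The paper bypasses both issues by first embedding $\rho_c(\tau_\zeta)$ as the unramified subrepresentation of the full Borel principal series $\Ind_{B_{\GL_{kc}}}^{\GL_{kc}}(\otimes_i|~|^{\zeta_i}\tau_i\delta_{B_{\GL_c}}^{-1/2})$; on that space $m(\zeta,\kappa)$ is a genuine standard intertwining operator for a specific Weyl element, factors into rank-one pieces on $\Ind_{B_{\GL_2}}^{\GL_2}$ with explicitly computable exponents, and the image principal series visibly contains \eqref{rep:zeta unr rho c tau} as its unramified subrepresentation---so the unramified vector automatically lands there. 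Finally, your reason for the numerator not vanishing (``would require $\Real(\zeta_i-\zeta_j)<0$'') is not correct; both numerator and denominator nonvanishing come from the same hypothesis $1-q^{-s}\tau_i(\varpi)\tau_j^{-1}(\varpi)\neq 0$ for $\Real(s)\geq 1$, since the relevant exponents satisfy $\Real(\zeta_i-\zeta_j)+l-l'\geq 1$ (denominator) and $1+\Real(\zeta_i-\zeta_j)+l-l'\geq 2$ (numerator).
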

\begin{proof}
The trivial representation of $\GL_c$ is an unramified subrepresentation of
$\Ind_{B_{\GL_c}}^{\GL_c}(\delta_{B_{\GL_c}}^{-1/2})$, hence $\rho_c(\tau_{\zeta})$ is an unramified subrepresentation of
\begin{align*}
\Ind_{P_{(c^k)}}^{\GL_{kc}}(\otimes_{i=1}^k\Ind_{B_{\GL_c}}^{\GL_c}(|~|^{\zeta_i}\tau_i\delta_{B_{\GL_c}}^{-1/2}))
=\Ind_{B_{\GL_{kc}}}^{\GL_{kc}}(\otimes_{i=1}^k|~|^{\zeta_i}\tau_i\delta_{B_{\GL_c}}^{-1/2}).
\end{align*}
Looking at $\kappa$, we see that the image of $m(\zeta,\kappa)$ on the space of this representation is contained in
\begin{align}\nonumber
&\Ind_{B_{\GL_{kc}}}^{\GL_{kc}}(|\det|^{-(c-l)/2}(\otimes_{i=1}^k|~|^{\zeta_i}\tau_i\delta_{B_{\GL_l}}^{-1/2})
\,\otimes\,|\det|^{l/2}(\otimes_{i=1}^k|~|^{\zeta_i}\tau_i\delta_{B_{\GL_{c-l}}}^{-1/2}))\\
&=\label{rep:image of m zeta kappa}
\Ind_{P_{(kl,k(c-l))}}^{\GL_{kc}}(\big(\Ind_{B_{\GL_{kl}}}^{\GL_{kl}}(\otimes_{i=1}^k|~|^{\zeta_i}\tau_i\delta_{B_{\GL_l}}^{-1/2})
\,\otimes\,\Ind_{B_{\GL_{k(c-l)}}}^{\GL_{k(c-l)}}(\otimes_{i=1}^k|~|^{\zeta_i}\tau_i\delta_{B_{\GL_{c-l}}}^{-1/2})\big)\delta_{P_{(kl,k(c-l))}}^{-1/(2k)}).
\end{align}
This representation contains \eqref{rep:zeta unr rho c tau} as an unramified subrepresentation.
We show $m(\zeta,\kappa)\xi^0(\zeta,\cdot)$ satisfies the required properties for the prescribed $\zeta$.
We may decompose $m(\zeta,\kappa)$ into rank-$1$ intertwining operators on spaces of the form
\begin{align*}
\Ind_{B_{\GL_2}}^{\GL_2}(|~|^{\zeta_i-(c-2l+1)/2}\tau_i\otimes|~|^{\zeta_j-(c-2l'+1)/2}\tau_j),\qquad i<j,\qquad l'\leq l-1.
\end{align*}
According to the Gindikin--Karpelevich formula (\cite[Theorem~3.1]{CS1}), each intertwining operator takes the normalized unramified vector in this space to a constant multiple of the normalized unramified vector in its image, and this constant is given by
\begin{align*}
\frac{1-q^{-1-\zeta_i+\zeta_j-l+l'}\tau_i(\varpi)\tau_j^{-1}(\varpi)}{1-q^{-\zeta_i+\zeta_j-l+l'}\tau_i(\varpi)\tau_j^{-1}(\varpi)}.
\end{align*}
Since $\Real(-\zeta_i+\zeta_j)\leq 0$ and $-l+l'\leq-1$, if the quotient has a zero or pole, then
$1-q^{-s}\tau_i(\varpi)\tau_j^{-1}(\varpi)=0$ for $\Real(s)\geq1$, contradicting our assumption. Therefore $m(\zeta,\kappa)\xi^0(\zeta,\cdot)$ is well defined and nonzero, and because it is unramified, it also belongs to
\eqref{rep:zeta unr rho c tau}.
\end{proof}
Integral~\eqref{eq:(k,c)  functional using an integral} is also absolutely convergent for $\Real(\zeta)$ in a cone
$\Real(\zeta_1)\gg\ldots\gg\Real(\zeta_k)$, which depends only on the inducing characters. The proof is that of the known result
for similar intertwining integrals.
\begin{lemma}\label{lemma:decomposition lemma k c func reducible unramified}
In the domain of absolute convergence of \eqref{eq:(k,c)  functional using an integral} and in general by meromorphic continuation,
for any meromorphic section $\xi$ on $V(\tau,c)$,
\begin{align*}
\int\limits_{V_{(c^k)}}\xi(\zeta,w_{(c^k)}v)\psi^{-1}(v)\,dv=
\int\limits_{V}m(\zeta,\kappa)\xi(\zeta,\kappa v)\,dv.
\end{align*}
Here $m(\zeta,\kappa)\xi$ belongs to the space obtained from \eqref{rep:image of m zeta kappa} by applying the
$(k,l)$ and $(k,c-l)$ functionals \eqref{eq:(k,c)  functional using an integral} on the respective factors of $P_{(kl,k(c-l))}$.
\end{lemma}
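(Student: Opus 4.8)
The plan is to prove the identity by unfolding. Decompose the unipotent group $V_{(c^k)}$ into its coordinate subgroups $V^1,V^2,V^3,V^4$ of \S\ref{k c functionals from partitions of c} (with $V=V^3$), split the long Weyl element $w_{(c^k)}$ accordingly through $\kappa=\kappa_{l,c-l}$, and recognize the resulting iterated integrals. First I would record the matrix identity expressing $w_{(c^k)}$ as $\diag(w_{(l^k)},w_{((c-l)^k)})\,w_{(kl,k(c-l))}\,\kappa$ times a torus element, where $w_{(l^k)}$ and $w_{((c-l)^k)}$ are the long elements of the two Levi blocks of $P_{(kl,k(c-l))}$; passing the torus element across the inducing data is exactly what introduces the normalization $\delta_{P_{(kl,k(c-l))}}^{-1/(2k)}$ appearing in \eqref{rep:image of m zeta kappa}. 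Combined with the conjugation analysis already carried out in \S\ref{k c functionals from partitions of c}, this says: conjugation by $\diag(w_{(l^k)},w_{((c-l)^k)})\kappa$ sends $V^1$ onto $V_{(l^k)}$ in the top Levi block, $V^4$ onto $V_{((c-l)^k)}$ in the bottom one, $V^2$ onto $Z$, and $V=V^3$ into $V_{(kl,k(c-l))}^-$; moreover the restriction of the character \eqref{eq:k c character} of $V_{(c^k)}$ to $V^1$ (resp. to $V^4$) transports to the $(k,l)$- (resp. $(k,c-l)$-) character \eqref{eq:k c character}, while it is trivial on $V^2$ and on $V$ (on $V$ because its only nonzero entries $v_{i,i+1}^3$ lie off the diagonal of the blocks $v_{i,i+1}$, cf. \eqref{eq:functional xi psi invariance}).

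Next, for $\zeta$ deep in the cone $\Real(\zeta_1)\gg\ldots\gg\Real(\zeta_k)$, I would write $v\in V_{(c^k)}$ as a product of elements of $V^1,V^2,V^3,V^4$ in a suitable order — the multiplication map being a measure-preserving polynomial isomorphism onto $V_{(c^k)}$, since each $V^t$ is a subgroup and their Lie algebras are complementary and span — and apply Fubini. By the previous paragraph, the $V^1$- and $V^4$-integrations become exactly the $(k,l)$- and $(k,c-l)$-Jacquet functionals \eqref{eq:(k,c)  functional using an integral} applied to the two factors of \eqref{rep:image of m zeta kappa}; the $V^2$-integration, after the relevant conjugation, reproduces $m(\zeta,\kappa)$; and the outer $V=V^3$ integration carries no character, so the whole expression collapses to $\int_{V}m(\zeta,\kappa)\xi(\zeta,\kappa v)\,dv$, with $m(\zeta,\kappa)\xi$ read as in the statement. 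This is the asserted right-hand side.

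For the analytic part: the left-hand side is integral~\eqref{eq:(k,c)  functional using an integral} for $\rho_c(\tau_{\zeta})$ (with $\beta=(1^k)$), and in particular converges absolutely and continues meromorphically in $\zeta$, as recalled just before the lemma; the intertwining integral $m(\zeta,\kappa)$ (Lemma~\ref{lemma:m zeta tau explicit} and the surrounding discussion) and the $(k,l)$- and $(k,c-l)$-functionals \eqref{eq:(k,c)  functional using an integral} likewise converge absolutely on a common cone and are meromorphic in $\zeta$. Hence Fubini is legitimate on that cone and yields the identity there; and since the right-hand side is built from the meromorphic objects $m(\zeta,\kappa)$ and the two inner functionals by an integral that is absolutely convergent on the cone (so itself meromorphic in $\zeta$), the identity of two meromorphic functions propagates to all $\zeta$.

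The step I expect to be the main obstacle is the bookkeeping in the first paragraph: pinning down the precise factorization of $w_{(c^k)}$ through $\kappa$, $w_{(kl,k(c-l))}$ and the smaller long elements, tracking how conjugation distributes $V^1,V^2,V^3,V^4$ among $V_{(l^k)}$, $V_{((c-l)^k)}$, $V_{(kl,k(c-l))}$ and $V_{(kl,k(c-l))}^-$, and checking that $\psi^{-1}$ splits exactly as claimed with no modulus characters beyond those already present in \eqref{rep:image of m zeta kappa}. This is, however, essentially the combinatorics already done in the discussion around \eqref{eq:mnk functional using w} and \eqref{eq:functional xi psi invariance}, so I would lean on that analysis rather than repeat it.
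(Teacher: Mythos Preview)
Your approach is the paper's approach: decompose $V_{(c^k)}$ as $V^1V^2V^3V^4$, factor $w_{(c^k)}$ through $\kappa$, and recognize the resulting iterated integral as the $(k,l)$ and $(k,c-l)$ Jacquet functionals applied to $m(\zeta,\kappa)\xi$, with the outer $dv$ over $V=V^3$.

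The one concrete correction concerns the factorization you propose. The identity used in the paper is simply
\[
w_{(c^k)}=\kappa^{-1}\,\diag(w_{(l^k)},w_{((c-l)^k)})\,\kappa,
\]
with no $w_{(kl,k(c-l))}$ and no torus element; one checks this directly by matrix multiplication (try $k=3$, $c=2$, $l=1$). Your proposed product $\diag(w_{(l^k)},w_{((c-l)^k)})\,w_{(kl,k(c-l))}\,\kappa$ gives the wrong underlying permutation, and a torus factor cannot fix that. Consequently your explanation of the twist $\delta_{P_{(kl,k(c-l))}}^{-1/(2k)}$ is also off: it does not come from shifting a torus element across the section, but purely from the way the permutation $\kappa$ rearranges the inducing characters $|~|^{\zeta_i}\tau_i\delta_{B_{\GL_c}}^{-1/2}$ into the two blocks of $P_{(kl,k(c-l))}$, which is exactly the computation in Lemma~\ref{lemma:m zeta tau explicit}. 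Once you use the correct factorization, everything you wrote in the second and third paragraphs goes through verbatim, and that is the whole proof.
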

\begin{proof}
Using matrix multiplication we see that $w_{(c^k)}=\kappa^{-1}\diag(w_{(l^k)},w_{((c-l)^k)})\kappa$. The character $\psi$ is trivial on $V^2$. Thus in its domain of absolute convergence integral~\eqref{eq:(k,c)  functional using an integral} equals
\begin{align*}
\int\limits_{V}\int\limits_{V^4}\int\limits_{V^1}m(\zeta,\kappa)\xi(\zeta,\diag(w_{(l^k)}{}^{\kappa}v_1,w_{((c-l)^k)}{}^{\kappa}v_4)\kappa v)\psi^{-1}(v_1)\psi^{-1}(v_4)\,dv_1\,dv_4\,dv.
\end{align*}
The integrals $dv_1dv_4$ constitute the applications of $(k,l)$ and $(k,c-l)$ functionals, e.g., ${}^{\kappa}V_1=\diag(V_{(l^k)},I_{k(c-l)})$ (see after \eqref{eq:mnk functional using w}). Now combine this with
the proof of Lemma~\ref{lemma:m zeta tau explicit}.
\end{proof}
Combining this result for $\xi^0$ with Lemma~\ref{lemma:m zeta tau explicit}, we obtain a result analogous to Lemma~\ref{lemma:decomposition for V functionals, n=a+b}.

\subsection{Equivariance property under $\GL_c^{\triangle}$}\label{k c functionals additional inv}
The following additional equivariance property of $(k,c)$ functionals is crucial for constructing integrals
involving $(k,c)$ models. Let $g\mapsto g^{\triangle}$ be the diagonal embedding of $\GL_c$ in $\GL_{kc}$.
\begin{lemma}\label{lemma:left invariance of k c on GL}
Let $\lambda$ be a $(k,c)$ functional on $\rho_c(\tau)$ and $\xi$ be a vector in the space of $\rho_c(\tau)$.
For any $g\in\GL_c$, $\lambda(\rho_c(\tau)(g^{\triangle})\xi)=\tau(\det(g)I_k)\lambda(\xi)$.
\end{lemma}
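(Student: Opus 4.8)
The plan is to reduce to the case of a character via the explicit realizations of the $(k,c)$ functional and then compute directly. First I would observe that the claimed identity is linear in $\lambda$, and by Theorem~\ref{theorem:any rep is WSS} the space of $(k,c)$ functionals on $\rho_c(\tau)$ is one dimensional; hence it suffices to verify it for a single nonzero choice of $\lambda$, and for a single convenient model of $\rho_c(\tau)$. Using the realization of Section~\ref{k c functionals from partitions of k}: by Lemma~\ref{lemma:rho kc in tau1 tau2} we may present $\rho_c(\tau)$ as a quotient of $\Ind_{P_{\beta c}}^{\GL_{kc}}(\otimes_i\rho_c(\tau_i))$ (taking $\beta=(1^k)$ in the archimedean case so that $\tau_i$ are quasi-characters $\chi_i$), and the $(k,c)$ functional is given by the analytic continuation of the Jacquet integral \eqref{eq:(k,c)  functional using an integral}. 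Since the functional on the induced space factors through $\rho_c(\tau)$, it is enough to prove the equivariance for the integral \eqref{eq:(k,c)  functional using an integral} acting on $\Ind_{P_{\beta c}}^{\GL_{kc}}(\otimes_i|\det|^{\zeta_i}W_\psi(\rho_c(\tau_i)))$; by transitivity of induction and multiplicativity of the characters $\tau(\det(g)I_k)=\prod_i\tau_i(\det(g)I_{k_i})$, this in turn reduces to the rank-one building blocks $\rho_c(\chi)$ for a quasi-character $\chi$ of $F^*$, where $W_\psi(\rho_c(\chi))$ is realized inside $\Ind_{P_{(c^k)}}^{\GL_{kc}}(\otimes_{i=1}^k|~|^{\zeta_i}\chi\circ\det)$.

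Next I would handle the base case $\rho_c(\chi)=\Ind_{P_{(c^k)}}^{\GL_{kc}}(\otimes_{i=1}^k\chi\circ\det)$ directly. Here the $(k,c)$ functional is the Jacquet integral $\xi\mapsto\int_{V_{(1^k)c}}\xi(w_{(1^k)c}v)\psi^{-1}(v)\,dv$ over the appropriate unipotent radical (this is \eqref{eq:(k,c)  functional using an integral} with $\beta=(1^k)$). The element $g^{\triangle}$, the diagonal embedding of $g\in\GL_c$ into $\GL_{kc}=\GL_{(c^k)}$, normalizes $V_{(c^k)}$ up to the character \eqref{eq:k c character} (this is precisely the fact recorded after \eqref{eq:k c character}: $G_{\sigma,\psi}=\GL_c^{\triangle}$ preserves $\psi$), so conjugating $g^{\triangle}$ past the integration variable changes $\psi^{-1}(v)$ by nothing and introduces a Jacobian equal to $\delta$ of the relevant parabolic evaluated at $g^{\triangle}$, which is trivial since $g^{\triangle}$ lies in the Levi $M_{(c^k)}$ and normalizes each root subgroup with the matching inverse scaling downstairs — one checks the product of determinants is $1$. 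What remains is $\xi$ evaluated at $w_{(c^k)}g^{\triangle}(\text{something})$; pushing $g^{\triangle}$ through $w_{(c^k)}$ one gets $w_{(c^k)}g^{\triangle}=g'^{\triangle}w_{(c^k)}$ for the same $g$ (the permutation matrix $w_{(c^k)}$ commutes with $\GL_c^{\triangle}$ up to relabeling blocks, and acts trivially on the diagonal embedding), and then left-equivariance of the section under the Levi block $\GL_c^{\triangle}\subset\GL_c^{\times k}$ produces the scalar $\prod_{i=1}^k(\chi\circ\det)(g)=\chi(\det g)^k=\chi(\det(g)I_k)$ as the representation $\tau=\chi\circ\det$ on $\GL_k$ evaluated at $\det(g)I_k$. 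This gives the identity for $\rho_c(\chi)$ with $\tau=\chi\circ\det$ a character of $\GL_1$, i.e.\ $k=1$; for general $k$ one repeats with $\tau_i$ supercuspidal as inducing data and uses that for tempered $\tau_i$ the analogous statement on $W_\psi(\rho_c(\tau_i))$ holds by the same argument applied one rung lower, via the inductive construction of Section~\ref{k c functionals from partitions of c}.

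Finally I would assemble the pieces: run the reduction of the first paragraph in reverse. Having the equivariance on each rank-one factor $W_\psi(\rho_c(\tau_i))$ (by induction on the depth of the Langlands data, the base case being a character as above, the inductive step being Lemma~\ref{lemma:decomposition for V functionals, n=a+b} or Lemma~\ref{lemma:decomposition lemma k c func reducible unramified} which express the $(k,c)$ functional on $\rho_c(\tau_i)$ in terms of $(k,l)$ and $(k,c-l)$ functionals on smaller Speh pieces, and one checks $g^{\triangle}$ acts compatibly through $\kappa$ — note $\kappa$ commutes with $\GL_c^{\triangle}$ since both are built from block-diagonal data), the integral \eqref{eq:(k,c)  functional using an integral} on $\Ind_{P_{\beta c}}^{\GL_{kc}}(\otimes_i|\det|^{\zeta_i}W_\psi(\rho_c(\tau_i)))$ inherits the equivariance with scalar $\prod_i|\det g|^{\zeta_i\cdot?}\cdot\tau_i(\det(g)I_{k_i})$; the $|\det|$ twist contributes $\prod_i|\det(g)I_c|^{\zeta_i}$ which is absorbed into $|\det|^{a_i}\tau_i$, so the product collapses to $\tau(\det(g)I_k)$ by the block structure of $\tau=\Ind_{P_\beta}^{\GL_k}(\otimes|\det|^{a_i}\tau_i)$ evaluated at the central element $\det(g)I_k\in\GL_k$ (a central element acts on an induced representation by the product of the inducing central characters). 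Meromorphic continuation in $\zeta$ preserves the identity, and specializing $\zeta$ to the Speh point and passing to the quotient $\rho_c(\tau)$ finishes the proof. The main obstacle I anticipate is bookkeeping: verifying that the Jacobian/modulus contributions from conjugating $g^{\triangle}$ past the various unipotent integrations and intertwining operators cancel exactly, and that $\kappa$ and $w_{(c^k)}$ genuinely commute with $\GL_c^{\triangle}$ on the nose (up to harmless block relabeling) — these are routine but error-prone matrix computations rather than conceptual difficulties.
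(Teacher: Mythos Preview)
Your reduction via the Jacquet integral \eqref{eq:(k,c)  functional using an integral} is sound: $g^{\triangle}$ does normalize $V_{\beta'c}$ with trivial Jacobian and fixes $\psi$, $w_{\beta c}$ genuinely commutes with $g^{\triangle}$, and $\delta_{P_{\beta c}}(g^{\triangle})=1$, so the identity for $\rho_c(\tau)$ reduces to the identity for each $\rho_c(\tau_i)$. The gap is at the bottom of your induction. Over a $p$-adic field the reduction via Lemma~\ref{lemma:rho kc in tau1 tau2} terminates at $\rho_c(\tau_0)$ with $\tau_0$ \emph{supercuspidal} on $\GL_r$ for some $r>1$, not at characters; there is no further decomposition in $k$. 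For this base case you invoke the realization of \S\ref{k c functionals from partitions of c} and assert that ``$\kappa$ commutes with $\GL_c^{\triangle}$''. This is false: writing $g=\left(\begin{smallmatrix}g_1&g_2\\g_3&g_4\end{smallmatrix}\right)$ with $g_1\in\Mat_l$, one has
\[
{}^{\kappa}(g^{\triangle})=\begin{pmatrix}g_1^{\triangle}&g_2^{\triangle}\\[2pt] g_3^{\triangle}&g_4^{\triangle}\end{pmatrix},
\]
which lies in $M_{(kl,k(c-l))}$ only when $g_2=g_3=0$. (Already for $k=c=2$, $l=1$ one computes ${}^{\kappa}(g^{\triangle})=\left(\begin{smallmatrix}aI_2&bI_2\\cI_2&dI_2\end{smallmatrix}\right)\ne g^{\triangle}$.) Likewise $g^{\triangle}$ does not normalize $V=V^3$ for non-block-diagonal $g$, since conjugating a matrix with only the $v^3_{i,j}$ block nonzero by $g$ produces all four blocks. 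So the inductive step on $c$ via \eqref{eq:mnk functional using w} only yields the equivariance for $g$ in the Levi $\GL_l\times\GL_{c-l}$, hence (iterating with $l=1$) for $T_{\GL_c}$, but says nothing about unipotent $g$.

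The paper's proof circumvents this exactly here: it treats $T_{\GL_c}$ and $\SL_c$ separately. The torus case is handled by your computation (realization \eqref{eq:mnk functional using w} with $l=1$, induction on $c$; this is where the paper also uses ${}^{\kappa}(t^{\triangle})=\diag(t_1I_k,{t'}^{\triangle})$). For $\SL_c$ the paper uses a one-line structural argument you are missing: since $\rho_c(\tau)$ is $(k,c)$, the Jacquet module $J_{V_{(c^k)},\psi}(\rho_c(\tau))$ is one dimensional, and $\SL_c^{\triangle}$ (being perfect) must act trivially on any one-dimensional module. This immediately gives $\lambda(\rho_c(\tau)(g^{\triangle})\xi)=\lambda(\xi)$ for all $g\in\SL_c$, without any explicit integral manipulation. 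Your approach can be repaired by inserting this observation in place of the faulty $\kappa$-commutation step.
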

\begin{proof}
The claim clearly holds for $c=1$, since then $g^{\triangle}$ belongs to the center of $\GL_k$. Let $c>1$.
We prove separately that $\lambda(\rho_c(\tau)(t^{\triangle})\xi)=\tau(\det(t))\lambda(\xi)$ for all $t\in T_{\GL_c}$ and
$\lambda(\rho_c(\tau)(g^{\triangle})\xi)=\lambda(\xi)$ for all $g\in \SL_c$. Since all $(k,c)$ functionals are proportional, we may prove each equivariance property using a particular choice of functional.

First take $t=\diag(t_1,\ldots,t_c)$.
Assume $\tau$ is irreducible essentially tempered. Consider the $(k,c)$ functional
\eqref{eq:mnk functional using w} with $l=1<c$. Conjugate $V$ by $t^{\triangle}$, then
${}^{\kappa}(t^{\triangle})=\diag(t_1I_k,{t'}^{\triangle})$ with $t'=\diag(t_2,\ldots,t_c)$ and
${t'}^{\triangle}\in \GL_{k(c-1)}$. The change to the measure of $V$ is
$\delta_{P_{(k,k(c-1))}}^{-1/2+1/(2k)}({}^{\kappa}t)$ and the result now follows using induction. The case of irreducible generic $\tau$ is reduced to the essentially tempered case using \eqref{eq:(k,c)  functional using an integral} and note that $\delta_{P_{(\beta c)}}(t^{\triangle})=1$ for any composition $\beta$ of $k$. If $\tau$ is
a reducible unramified principal series we again compute using \eqref{eq:(k,c)  functional using an integral}.

It remains to consider $g\in\SL_c$. By definition the Jacquet module of $\rho_c(\tau)$ with respect to $V_{(c^k)}$ and \eqref{eq:k c character} is one dimensional (whether $\tau$ is irreducible or unramified principal series), hence $\SL_c^{\triangle}$ acts trivially on the Jacquet module, and the result follows.
\end{proof}

\def\cprime{$'$} \def\cprime{$'$} \def\cprime{$'$}


\begin{thebibliography}{CFGK19}

\bibitem[AGS15a]{AGS2015}
A.~Aizenbud, D.~Gourevitch, and S.~Sahi.
\newblock Derivatives for smooth representations of {$GL(n,\Bbb{R})$} and
  {$GL(n,\Bbb{C})$}.
\newblock {\em Israel J. Math.}, 206(1):1--38, 2015.

\bibitem[AGS15b]{AGS2015b}
A.~Aizenbud, D.~Gourevitch, and S.~Sahi.
\newblock Twisted homology for the mirabolic nilradical.
\newblock {\em Israel J. Math.}, 206(1):39--88, 2015.

\bibitem[Ban98]{Banks}
W.~D. Banks.
\newblock A corollary to {B}ernstein's theorem and {W}hittaker functionals on
  the metaplectic group.
\newblock {\em Math. Res. Lett.}, 5(6):781--790, 1998.

\bibitem[BSS90]{BarbaschSahiSpeh1990}
D.~Barbasch, S.~Sahi, and B.~Speh.
\newblock Degenerate series representations for {${\rm GL}(2n,{\bf R})$} and
  {F}ourier analysis.
\newblock In {\em Symposia {M}athematica, {V}ol. {XXXI} ({R}ome, 1988)},
  Sympos. Math., XXXI, pages 45--69. Academic Press, London, 1990.

\bibitem[BZ76]{BZ1}
I.~N. Bernstein and A.~V. Zelevinsky.
\newblock Representations of the group ${GL(n,F)}$ where ${F}$ is a local
  non-{A}rchimedean field.
\newblock {\em Russian Math. Surveys}, 31(3):1--68, 1976.

\bibitem[BZ77]{BZ2}
I.~N. Bernstein and A.~V. Zelevinsky.
\newblock Induced representations of reductive ${p}$-adic groups {I}.
\newblock {\em Ann. Scient. \'{E}c. Norm. Sup.}, 10(4):441--472, 1977.

\bibitem[CFGK19]{CFGK2}
Y.~Cai, S.~Friedberg, D.~Ginzburg, and E.~Kaplan.
\newblock Doubling constructions and tensor product ${L}$-functions: the linear
  case.
\newblock {\em Invent. Math.}, 217(3):985--1068, 2019.

\bibitem[Car93]{Cr}
R.~W. Carter.
\newblock {\em Finite groups of {L}ie type}.
\newblock Wiley Classics Library. John Wiley \& Sons Ltd., Chichester, 1993.
\newblock Conjugacy classes and complex characters, Reprint of the 1985
  original, A Wiley-Interscience Publication.

\bibitem[Cas80]{CS1}
W.~Casselman.
\newblock The unramified principal series of ${p}$-adic groups {I}: the
  spherical function.
\newblock {\em Compositio Math.}, 40(3):387--406, 1980.

\bibitem[CM93]{CM}
D.~H. Collingwood and W.~M. McGovern.
\newblock {\em Nilpotent orbits in semisimple {L}ie algebras}.
\newblock Van Nostrand Reinhold Mathematics Series. Van Nostrand Reinhold Co.,
  New York, 1993.

\bibitem[DM78]{DM}
J.~Dixmier and P.~Malliavin.
\newblock Factorisations de fonctions et de vecteurs ind\'efiniment
  diff\'erentiables.
\newblock {\em Bull. Sci. Math. (2)}, 102(4):307--330, 1978.

\bibitem[Gin06]{G2}
D.~Ginzburg.
\newblock Certain conjectures relating unipotent orbits to automorphic
  representations.
\newblock {\em Israel J. Math.}, 151(1):323--355, 2006.

\bibitem[GGS17]{GGS}
R.~Gomez, D.~Gourevitch, and S.~Sahi.
\newblock Generalized and degenerate {W}hittaker models.
\newblock {\em Compos. Math.}, 153(2):223--256, 2017.

\bibitem[GK]{DimaKaplan}
D.~Gourevitch and E.~Kaplan.
\newblock Multiplicity one theorems for the generalized doubling method.
\newblock Preprint 2019, to appear in the J. Eur. Math. Soc.

\bibitem[GS13]{GourevitchSahi2013}
D.~Gourevitch and S.~Sahi.
\newblock Annihilator varieties, adduced representations, {W}hittaker
  functionals, and rank for unitary representations of {${\rm GL}(n)$}.
\newblock {\em Selecta Math. (N.S.)}, 19(1):141--172, 2013.

\bibitem[Jac84]{Jac4}
H.~Jacquet.
\newblock On the residual spectrum of {${\rm GL}(n)$}.
\newblock In {\em Lie group representations, {II} ({C}ollege {P}ark, {M}d.,
  1982/1983)}, volume 1041 of {\em Lecture Notes in Math.}, pages 185--208.
  Springer, Berlin, 1984.

\bibitem[Jac09]{Jac5}
H.~Jacquet.
\newblock Archimedean {R}ankin-{S}elberg integrals.
\newblock In {\em Automorphic forms and {$L$}-functions {II}. {L}ocal aspects},
  volume 489 of {\em Contemp. Math.}, pages 57--172. Amer. Math. Soc.,
  Providence, RI, 2009.

\bibitem[JPSS83]{JPSS}
H.~Jacquet, I.~I. Piatetski-Shapiro, and J.~A. Shalika.
\newblock {R}ankin--{S}elberg convolutions.
\newblock {\em Amer. J. Math.}, 105(2):367--464, 1983.

\bibitem[JS83]{JS}
H.~Jacquet and J.~A. Shalika.
\newblock The {W}hittaker models of induced representations.
\newblock {\em Pacific. J. Math.}, 109(1):107--120, 1983.

\bibitem[LM14]{LMinq2014}
E.~Lapid and A.~M{\'\i}nguez.
\newblock On a determinantal formula of {T}adi\'c.
\newblock {\em Amer. J. Math.}, 136(1):111--142, 2014.

\bibitem[LR05]{LR}
E.~M. Lapid and S.~Rallis.
\newblock On the local factors of representations of classical groups.
\newblock In J.~W. Cogdell, D.~Jiang, S.~S. Kudla, D.~Soudry, and R.~Stanton,
  editors, {\em Automorphic representations, ${L}$-functions and applications:
  progress and prospects}, pages 309--359. Ohio State Univ. Math. Res. Inst.
  Publ., 11, de Gruyter, Berlin, 2005.

\bibitem[MW89]{MW4}
C.~M{\oe}glin and J.-L. Waldspurger.
\newblock Le spectre r\'esiduel de {${\rm GL}(n)$}.
\newblock {\em Ann. Sci. \'Ecole Norm. Sup. (4)}, 22(4):605--674, 1989.

\bibitem[SS90]{SahiStein1990}
S.~Sahi and E.~M. Stein.
\newblock Analysis in matrix space and {S}peh's representations.
\newblock {\em Invent. Math.}, 101(2):379--393, 1990.

\bibitem[SV00]{SchmidVilonen2000}
W.~Schmid and K.~Vilonen.
\newblock Characteristic cycles and wave front cycles of representations of
  reductive {L}ie groups.
\newblock {\em Ann. of Math. (2)}, 151(3):1071--1118, 2000.

\bibitem[Sha78]{Sh2}
F.~Shahidi.
\newblock Functional equation satisfied by certain ${L}$-functions.
\newblock {\em Compositio Math.}, 37:171--208, 1978.

\bibitem[Sou93]{Soudry}
D.~Soudry.
\newblock Rankin--{S}elberg convolutions for {${\rm SO}_{2l+1}\times{\rm
  GL}_n$}: local theory.
\newblock {\em Mem. Amer. Math. Soc.}, 105(500):vi+100, 1993.

\bibitem[Sou95]{Soudry3}
D.~Soudry.
\newblock On the {A}rchimedean theory of {R}ankin-{S}elberg convolutions for
  {${\rm SO}_{2l+1}\times{\rm GL}_n$}.
\newblock {\em Ann. Sci. \'Ecole Norm. Sup. (4)}, 28(2):161--224, 1995.

\bibitem[Sou00]{Soudry2}
D.~Soudry.
\newblock Full multiplicativity of gamma factors for {${\rm
  SO}_{2l+1}\times{\rm GL}_n$}.
\newblock {\em Israel J. Math.}, 120(1):511--561, 2000.

\bibitem[Tad86]{tadic86}
M.~Tadi{\'c}.
\newblock Classification of unitary representations in irreducible
  representations of general linear group (non-{A}rchimedean case).
\newblock {\em Ann. Sci. \'Ecole Norm. Sup. (4)}, 19(3):335--382, 1986.

\bibitem[Tad87]{Tadic1986}
M.~Tadi{\'c}.
\newblock Unitary representations of {${\rm GL}(n)$}, derivatives in the
  non-{A}rchimedean case.
\newblock In {\em V. {M}athematikertreffen {Z}agreb-{G}raz
  ({M}ariatrost/{G}raz, 1986)}, volume 274 of {\em Ber. Math.-Statist. Sekt.
  Forschungsgesellsch. Joanneum}, pages Ber.\ No.\ 281, 19. Forschungszentrum
  Graz, Graz, 1987.

\bibitem[Vog78]{Vog78}
D.~A.~Jr. Vogan.
\newblock Gel\cprime fand-{K}irillov dimension for {H}arish-{C}handra modules.
\newblock {\em Invent. Math.}, 48(1):75--98, 1978.

\bibitem[Vog86]{Vog86}
D.~A.~Jr. Vogan.
\newblock The unitary dual of {${\rm GL}(n)$} over an {A}rchimedean field.
\newblock {\em Invent. Math.}, 83(3):449--505, 1986.

\bibitem[Vog91]{David1991}
D.~A.~Jr. Vogan.
\newblock Associated varieties and unipotent representations.
\newblock In {\em Harmonic analysis on reductive groups ({B}runswick, {ME},
  1989)}, volume 101 of {\em Progr. Math.}, pages 315--388. Birkh\"auser
  Boston, Boston, MA, 1991.

\bibitem[Wal88]{Wall88b}
N.~R. Wallach.
\newblock Lie algebra cohomology and holomorphic continuation of generalized
  {J}acquet integrals.
\newblock In {\em Representations of {L}ie groups, {K}yoto, {H}iroshima, 1986},
  volume~14 of {\em Adv. Stud. Pure Math.}, pages 123--151. Academic Press,
  Boston, MA, 1988.

\bibitem[Wal06]{Wallach2006}
N.~R. Wallach.
\newblock Holomorphic continuation of generalized {J}acquet integrals for
  degenerate principal series.
\newblock {\em Represent. Theory}, 10:380--398, 2006.

\bibitem[Zel80]{Z3}
A.~V. Zelevinsky.
\newblock Induced representations of reductive {${p}$}-adic groups. {II}. {O}n
  irreducible representations of {${\rm GL}(n)$}.
\newblock {\em Ann. Sci. \'Ecole Norm. Sup. (4)}, 13(2):165--210, 1980.

\end{thebibliography}
\end{document}